\DeclareMathAlphabet{\mathbbold}{U}{bbold}{m}{n}
\newcommand*{\minOp}{\operatornamewithlimits{min}\limits}
\newcommand*{\sumOp}{\operatornamewithlimits{\sum}\limits}
\newcommand{\tr}{{\mathsf{T}}}
\newcommand{\zero}{\mathbf{0}}
\newcommand{\eye}{\mathbb{I}}
\newcommand{\eye}{\mathbf{I}}
\newcommand{\vc}[1]{{ \mathrm{#1} }}
\newcommand{\mx}[1]{{ \mathrm{#1} }}
\newcommand{\norm}[1]{\|#1\|}
\newcommand{\inner}[2]{{ \langle {#1,#2} \rangle}}
\newcommand{\nth}{{\text{\tiny{th}}}}
\newcommand{\trace}{\mathrm{tr}}
\newcommand{\Dscr}{{\mathscr{D}}}
\newcommand{\Lscr}{{\mathscr{L}}}
\newcommand{\Rscr}{{\mathscr{R}}}
\newcommand{\Bcal}{{\mathcal{B}}}
\newcommand{\Ccal}{{\mathcal{C}}}
\newcommand{\Dcal}{{\mathcal{D}}}
\newcommand{\Fcal}{{\mathcal{F}}}
\newcommand{\Hcal}{{\mathcal{H}}}
\newcommand{\Ical}{{\mathcal{I}}}
\newcommand{\Jcal}{{\mathcal{J}}}
\newcommand{\Ncal}{{\mathcal{N}}}
\newcommand{\Ucal}{{\mathcal{U}}}
\newcommand{\Vcal}{{\mathcal{V}}}
\newcommand{\Xcal}{{\mathcal{X}}}
\newcommand{\Ycal}{{\mathcal{Y}}}
\newcommand{\Zcal}{{\mathcal{Z}}}
\newcommand{\Nbb}{{\mathbb{N}}}
\newcommand{\Rbb}{{\mathbb{R}}}
\newcommand{\Sbb}{{\mathbb{S}}}
\newcommand{\bbk}{\mathbbold{k}}
\newcommand{\RR}{\mathbb{R}}
\theoremstyle{plain}
\newtheorem{theorem}{Theorem}%[section]
\newtheorem{proposition}[theorem]{Propositon}
\newtheorem{definition}{Definition}
\newtheorem{remark}{Remark}
\newtheorem*{problem*}{Problem}
\newlist{todolist}{itemize}{2}
\setlist[todolist]{label=$\square$}
\newcommand{\OmegaROA}{\Omega_{\text{ROA}}}
\newcommand{\nS}{{n_{\text{s}}}}
\newcommand{\nT}{{n_{\text{T}}}}
\newcommand{\nG}{{n_{\text{g}}}}
\newcommand{\Kernel}{\mathbbm{K}}
\newcommand{\kernel}{\mathbbm{k}}
\newcommand{\Hk}{\Hcal_\mathbbm{k}}
\newcommand{\HK}{\Hcal_\mathbbm{K}}
\newcommand{\Kernel}{\mathbb{K}}
\newcommand{\kernel}{\bbk}
\newcommand{\HK}{\Hcal_\mathbb{K}}
\newcommand{\Hk}{\Hcal_{\bbk}}
\newcommand{\mxW}{\mx{W}}
\newcommand{\mxT}{\mx{T}}
\newcommand{\mxP}{\mx{P}}
\newcommand{\vcx}{\vc{x}}
\newcommand{\vcy}{\vc{y}}
\newcommand{\vcz}{\vc{z}}
\newcommand{\vca}{\vc{a}}
\newcommand{\vcab}{\vc{\mathbf{a}}}
\newcommand{\mxA}{\mx{A}}
\newcommand{\vcb}{\vc{b}}
\newcommand{\mxD}{\mx{D}}
\newcommand{\mxJ}{\mx{J}}
\newcommand{\vck}{\vc{k}}
\newcommand{\mxK}{\mx{K}}
\newcommand{\mxKb}{\mx{\mathbf{K}}}
\newcommand{\mxkb}{\vc{\mathbf{k}}}
\newcommand{\Zbbp}{\mathbb{Z}_{
		\scalebox{0.5}{\text{$\ge 0$}}
}}
\title{\LARGE \bf
Nonlinear System Identification with Prior Knowledge of the Region of Attraction}%
\author{Mohammad~Khosravi\thanks{Corresponding author}~ and~ Roy~S.~Smith%	
	% <-this % stops a space
	\thanks{This research project is part of the Swiss Competence Center for Energy
		Research SCCER FEEB\&D of the Swiss Innovation Agency Innosuisse.}% <-this % stops a space
	\ \thanks{The authors are with Automatic Control Lab,  ETH Zurich, Switzerland\\
	 {\tt\small  \{khosravm,rsmith\}@control.ee.ethz.ch}}
}
\begin{document}
\date{}
\maketitle
\thispagestyle{empty}
\pagestyle{empty}
%_______________________________________________
\begin{abstract}
We consider the problem of nonlinear system identification when prior knowledge is available on the region of attraction (ROA) of an equilibrium point. We propose an identification method in the form of an optimization problem, minimizing the fitting error and guaranteeing the desired stability property. The problem is approached by joint identification the dynamics and a Lyapunov function verifying the stability property. In this setting, the hypothesis set is a reproducing kernel Hilbert space, and with respect to each point of the given subset of the ROA, the Lie derivative inequality of the Lyapunov function imposes a constraint. The problem is a non-convex infinite-dimensional optimization with infinite number of constraints. To obtain a tractable formulation, only  a suitably designed finite subset of the constraints are considered. The resulting problem admits a solution in form of a linear combination of the sections of the  kernel and its derivatives. An equivalent optimization problem with a quadratic cost function subject to linear and bilinear constraints is derived. A suitable change of variable gives a convex reformulation of the problem. To reduce the number of hyperparameters, the optimization problem is adapted to the case of diagonal kernels. The method is demonstrate  by means of an example.	
\end{abstract}

\section{Introduction}\label{sec:int}
The identification of nonlinear systems has received significant attention due to its potential in modeling various phenomena in science and engineering \cite{schoukens2019nonlinear}.
Given the measurement data, the techniques of optimization, statistics, and system identification are to mathematically model the physical systems \cite{LjungBooK2}. In many situations modeling involves more than fitting  nonlinear dynamics to the measurement data; one should include additional features as prior knowledge which are expected according to our understanding of the system.
For example, the system properties like stability, passivity and positivity are already considered for the identification of linear dynamics \cite{pillonetto2014kernel,goethals2003identification,khosravi2019positive}.  

For nonlinear systems, stabilizability of the dynamics is considered as a part of the identification problem in \cite{sattar2020non,singh2019learning}. Identification of a stable nonlinear dynamical system has been studied in \cite{calinon2010learning,khansari2011learning,khansari2014learning,ijspeert2013dynamical,sindhwani2018learning,khansari2017learning} mainly motivated by imitation learning.
In \cite{calinon2010learning}, hidden Markov models and Gaussian mixtures are used for modeling the dynamics.
A similar approach is presented in \cite{khansari2011learning} with guaranteed global stability.
In \cite{khansari2014learning}, a two-stage approach is presented where, first a parametric Lyapunov function as well as a model for the dynamics are learned, and then, the learned dynamics is stabilized using the Lyapunov function.
The approach presented in \cite{ijspeert2013dynamical} models the dynamics as a  weakly nonlinear system which consists of a stable linear part for capturing the baseline behavior, and a nonlinear part to account for more complex phenomena, and  a phase variable for the coupling these two parts. It is shown in \cite{ijspeert2013dynamical} that the derived model is stable and time-varying.
An identification method is introduced for learning a globally stable system in \cite{sindhwani2018learning}. Similar to the current paper, the hypothesis space in \cite{sindhwani2018learning} is a {smooth vector-valued reproducing kernel Hilbert space}  (SVRKHS) \cite{zhou2008derivative,singh2019learning,carmeli2006vector}.
% Accordingly, \cite{sindhwani2018learning} is the closest work in the literature to the current paper.
Meanwhile, the stability condition in \cite{sindhwani2018learning} is only imposed locally over the data points by forcing the eigenvalues of the corresponding Jacobian to be negative at sampling points.
 
In this paper, we propose a nonlinear system identification method designed to include the available knowledge on a subset of the region of attraction (ROA) of a stable equilibrium point. 
Assuming that this stability property can be verified by a quadratic Lyapunov function, the problem is then formulated as a joint nonparametric estimation over a hypothesis space for the unknown dynamics, characterized here by a SVRKHS, and also, the space of positive definite matrices in order to determine the Lyapunov function. 
The resulting formulation is a non-convex optimization problem over an infinite dimensional space with infinite number of bilinear constraints, arising from the Lie derivative of the Lyapunov function with respect the points of the given subset of ROA. 
In order to make the problem tractable, we first introduce a suitable finite subset of the given subset of ROA such that verifying the Lie derivative inequality on these points guarantees the desired stability property. Following this, we reformulate the problem into a  finite dimensional optimization problem with a quadratic cost function, and linear and bilinear constraints, modeling the stability of the system at the equilibrium point and in the given region. We prove that this problem admits a solution with a linear parametric representation in terms of the sections of the kernel as well as its derivatives. 
Using a non-obvious change of variables, we derive a convex reformulation of the problem. Following this, in order to mitigate the hyperparameter tuning issue, we present the case for diagonal kernels. The method is demonstrated numerically by means of an example. 

\section{Notations and Preliminaries}
The set of natural numbers, the set of non-negative integers, the set of real numbers, the $n$-dimensional Euclidean space and the space of $n$ by $m$ real matrices are denoted by $\Nbb$, $\Zbbp$,  $\Rbb$, $\Rbb^n$ and $\Rbb^{n\times m}$ respectively.
The identity matrix and zero vector in the Euclidean space are denoted by $\eye$ and $\zero$ respectively.
The set of symmetric positive definite matrices in $\Rbb^{n\times n}$ is denoted by $\Sbb_{++}^n$.
For any pair of symmetric matrices $\mx{X},\mx{Y}\in\RR^{n\times n}$, we write $\mx{X}\succeq \mx{Y}$ if $\mx{X}-\mx{Y}\in\Sbb_{++}^n$.
Given $\mxW\in \Sbb_{++}^n$, $\|\cdot\|_{\mxW}$ is a norm on $\Rbb^n$ defined as $\|\vcx\|_{\mxW}:=\vcx^\tr \mxW \vcx$, for any $\vcx\in\Rbb^n$. When $\mxW=\eye$, we drop subscript $\mxW$. The disk in $\Rbb^n$ with center $\vc{c}$ and radius $r>0$ is denoted by $\Bcal(\vc{c},r)$ and defined as $\Bcal(\vc{c},r):=\{\vc{x}\in\Rbb^n\ | \ \norm{\vc{x}-\vc{c}}< r\}$.
For a vector $\alpha = (\alpha_i)_{i=1}^n\in\Zbbp^n$, we define $|\alpha|:=\sum_{i=1}^n\alpha_i\le s$ and
for a function $f$ of $n$ variables we denote 
the partial derivative $\frac{\partial^{\alpha_1}}{\partial x_1^{{\alpha_1}}}
\ldots\frac{\partial^{\alpha_n}}{\partial x_n^{{\alpha_n}}}
f(\vcx)$ by $\partial_\alpha f(\vcx)$.
Similarly, if $f$ is a function with $k$ multivariable arguments, $\partial^i_\alpha f$ denotes the partial derivative of $f$ with respect to the $i^\nth$ argument.
The derivative operator is denoted by $\mx{D}$, i.e., the derivative of $f$ is shown by $\mx{D}f$. 
The interior of set $\Xcal$ is denoted by $\Xcal^o$.
Let $\Xcal$ be a compact subset of $\Rbb^n$ such that $\Xcal$ is the closure of $\Xcal^o$.
Then, $C^{s}(\Xcal,\Rbb^n)$ is defined as the Banach space of functions $g:\Xcal\to\Rbb^n$ where $\partial_{\alpha} g|_{\Xcal^o}$ is well-defined and has a continuous extension to $\Xcal$, for any $\alpha\in\Zbbp^n$ such that $|\alpha|\le s$. 
The norm on $C^{s}(\Xcal,\Rbb^n)$ is defined as 
$$
\|g\|_{C^{s}(\Xcal,\Rbb^n)}=\sumOp_{\alpha\in\Zbbp^n,|\alpha|\le s}\sup_{\vcx}\|\partial_{\alpha}f(\vcx)\|.
$$
One can define $C^{2s}(\Xcal\times\Xcal,\Rbb^{n\times n})$ similarly.
Let $\Ycal$ be a set and $\Ccal$ be a subset of $\Ycal$. The indicator function of $\Ccal$, denoted by $\Ical_{\Ccal}$, is defined as
$\Ical_{\Ccal}(y) = 0$, if $y\in\Ccal$ and  $\Ical_{\Ccal}(y) = \infty$, otherwise.
\section{Problem Statement}
Let $\Ucal$ be an open domain in $\Rbb^n$ and $f:\Ucal\to \Rbb^n$ be an \underline{\em unknown} vector field defined on $\Ucal$ which is $C^2(\Ucal,\Rbb^n)$. Consider the corresponding dynamical system defined as
\begin{equation}\label{eqn:xdot=f(x)}
\dot{\vc{x}} = f(\vc{x}), \quad \vc{x}(0) = \vc{x}_0,
\end{equation}
where $\vc{x}_0\in\Ucal$ is the initial point. Denote the solution of \eqref{eqn:xdot=f(x)}, at time instant $t\ge 0$, by $\vc{x}(t;\vc{x}_0)$.
Let the origin be an asymptotically stable equilibrium of dynamical system \eqref{eqn:xdot=f(x)}. 
Also, let the corresponding region of attraction (ROA) be denoted by $\OmegaROA$, i.e., we have 
\begin{equation}
\OmegaROA:= \{\vc{x}_0\in\Ucal \ |\ \lim_{t\to \infty}\vc{x}(t;\vc{x}_0)=\zero\}.
\end{equation}
Let $\Omega$ be a \underline{\em known} inner approximation for the region of attraction of the origin. 
More precisely, $\Omega$ is a known compact set with non-empty interior such that $\zero\in\Omega\subseteq\OmegaROA$.

Consider a set of trajectories of system,   
like $\{\vc{x}(\cdot;\vc{x}_0^i) \ |\ 1\le i\le\nT \}$, where the corresponding initial points belong to $\OmegaROA$, i.e.,  $\{\vc{x}_0^1,\ldots,\vc{x}_0^{\nT}\}\subset \OmegaROA$.
For any $i=1,\ldots,\nT$, suppose that the $i^\nth$ trajectory is sampled at time instants 
$0\le t^i_1<t^i_2<\cdots<t^i_{n_i}$ where $n_i\in\Nbb$.
Let $\vc{x}_k^i$ denote $\vc{x}(t_k^i;\vc{x}_0^i)$, for $1\le k\le n_i$.
Given these samples, one can estimate the time derivative of $\vc{x}(\cdot;\vc{x}^i_0)$ at the sampling time instants.
This estimation can be performed using various techniques, e.g. see \cite{wang2019robust} and the references therein, or by simply utilizing a nonlinear regression method and subsequently obtaining the derivatives numerically or analytically. 
Let these estimations be denoted by $\vc{y}_k^i$, for $1\le k\le n_i$.
One should note that $\vc{y}_k^i$ is an approximation of $f(\vc{x}_k^i)$.
Considering these samples of trajectories and their estimated derivatives, we get a set of {\em data}, denoted by $\Dscr$, which contains
$(\vc{x}_k^i,\vc{y}_k^i)$ pairs. For notation simplicity, we drop the superscripts and simply show set $\Dscr$ as $\{(\vc{x}_j,\vc{y}_j)\ |\ 1\le j\le m\}$, where $m=\sum_{i=1}^{\nT}n_i$.

\begin{problem*}
Given that origin is a stable equilibrium point of \eqref{eqn:xdot=f(x)} and 
the set $\Omega$ is provided as the prior knowledge about the region of attraction of the origin, the problem is to estimate the unknown vector field $f$, in a given class of functions $\Fcal\subseteq C^2(\Ucal,\Rbb^n)$, using
the set of data $\Dscr$.
\end{problem*}
In the next section, we introduce a tractable formulation of this problem as a nonparametric estimation.
The formulation can be extended to the case of multiple equilibria and multiple regions of attraction.

\section{Main Results: Identification Method}
We know that $f$ satisfies the constraint that $\vc{x}=\zero$ is an equilibrium point of \eqref{eqn:xdot=f(x)}, 
i.e.,
$f(\zero)=\zero$.
Moreover, we know that $\vc{x}=\zero$ is stable and $\Omega$ is a subset of the corresponding region of attraction. 
Assume that these stability features of $f$ can be verified by an \underline{\em unspecified} quadratic Lyapunov function $V(\vc{x}) = \frac{1}{2}\vc{x}^\tr\mx{P}\vc{x}$ where here $\mx{P}$ is a positive definite matrix.
More precisely, there exist an \underline{\em unknown} $\epsilon>0$ and an \underline{\em unknown} positive definite matrix $\mx{P}\succeq \eye$ such that 
\begin{equation}\label{eqn:xtrPfx_le_eps_normx2}
%\frac{\partial}{\partial \vc{x}}
\mx{D}V(\vc{x}) f(\vc{x}) = \vc{x}^\tr\mx{P}f(\vc{x})\le -\epsilon \|\vc{x}\|^2, \quad \forall \vc{x}\in\Omega.
\end{equation}

In the estimation problem, we need to minimize the fitting error, $\sum_{i=1}^m \|\vc{y}_i-f(\vc{x}_i)\|_{\mxW}^2$, subject to $f\in\Fcal$, $\vc{f}(\zero) =\zero$ and \eqref{eqn:xtrPfx_le_eps_normx2}.
There are two main issues: the correct choice of function class $\Fcal$, and dealing with the (uncountable) infinite number of constraints introduced in \eqref{eqn:xtrPfx_le_eps_normx2}.
These issues are addressed in this section.

\subsection{From Infinite to Finite Number of Constraints}
Since an optimization problem with an infinite-dimensional feasible set and infinite  number of imposed constraints can lead to intractability, particularly when the problem is non-convex as it is here, we need to introduce a finite number of suitable constraints implying \eqref{eqn:xtrPfx_le_eps_normx2}.
To this end, the notion of {\em $(\alpha,\beta)$-grid} is introduced in the next definition.
One should note that, based on the discussed given below, the  $(\alpha,\beta)$-grid is a suitably selected finite subset of $\Omega$ such that verifying stability condition 
%(Lie derivative of Lyapunov function) 
on its elements implies the desired stability feature given in \eqref{eqn:xtrPfx_le_eps_normx2}.

\begin{definition}\label{def:alpha_beta_grid}
Let $\{\vc{z}_1,\ldots,\vc{z}_{\nG}\}$ be a finite subset of $\Omega\backslash\{\zero\}$ denoted by $\Zcal$. We say  $\Zcal$ is an {\em $(\alpha,\beta)$-grid} for $\Omega$ if 
\begin{equation}\label{eqn:grid}
\Omega \subseteq 
\bigg{(}\bigcup_{i=1}^{\nG} \Bcal(\vc{z}_i,\alpha\|\vc{z}_i\|) \bigg{)}
\cup 
\Bcal(\zero,\beta).
\end{equation}
\end{definition}
The role of $(\alpha,\beta)$-grid in the estimation problem is shown by the next theorem. %For the statement of theorem, we need the following definitions.
Define $L_{1,f}$ and $L_{2,f}$ respectively as
\begin{equation}\label{eqn:L1fL2f}
\begin{array}{rll}
L_{1,f} & := & \sup_{\vc{x}\in\Omega}\|\mx{D}f(\vc{x})\|,\\
L_{2,f} & := & \sup_{\vc{x}\in\Omega}
\sup_{\vc{h}_1,\vc{h}_2\in\overline{\Bcal}(\zero,1)}
|\mx{D}^2f(\vc{x})(\vc{h}_1,\vc{h}_2)|.\\
\end{array}
\end{equation}	
Since $f$ is $C^2(\Ucal,\Rbb^n)$, we have that $L_{1,f},L_{2,f}<\infty$.
%----------------------------------------------------
\begin{theorem}\label{thm:grid_sufficient}
	Let $\Zcal:=\{\vc{z}_1,\ldots,\vc{z}_{\nG}\} \subset \Omega\backslash\{\zero\}$ and $\mx{P}\in\Sbb^n$ be such that 
	\begin{equation}\label{eqn:grid_const}
	\begin{array}{l l}
	\vc{z}_i^\tr\mx{P}f(\vc{z}_i)\le -\|\vc{z}_i\|^2, & \forall i = 1,\ldots,\nG,\\
	\frac{1}{2}(\mx{P}\mx{D}f(\zero)+\mx{D}f(\zero)^\tr \mx{P}) \preceq -\eye,\\
	\mx{P}\succeq \eye,
	\end{array}
	\end{equation}
	Given $\epsilon\in(0,1)$, let $\alpha$ and $\beta$ be real positive scalars where
	\begin{equation}\label{eqn:alpha_beta}
	\alpha \le \bigg{(}\frac{1+L_{1,f}\|\mx{P}\|}{\epsilon+L_{1,f}\|\mx{P}\|}\bigg{)}^{\frac{1}{2}}   -1,
	\quad
	\beta \le \frac{1-\epsilon}{L_{2,f}\|\mx{P}\|},
	\end{equation}
	and $\Zcal$ be an $(\alpha,\beta)$-grid for $\Omega$.
	Then, the following holds
	\begin{equation}\label{eqn:xtrPfx_le_eps_normx2_thm}
	\vc{x}^\tr\mx{P}f(\vc{x})\le -\epsilon \|\vc{x}\|^2, \quad \forall \vc{x}\in\Omega.
	\end{equation}
\end{theorem}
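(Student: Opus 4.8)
The plan is to use the covering property~\eqref{eqn:grid}: every $\vcx\in\Omega$ lies either in the ball $\Bcal(\zero,\beta)$ around the equilibrium or in some ball $\Bcal(\vc{z}_i,\alpha\|\vc{z}_i\|)$ around a grid point, and to verify~\eqref{eqn:xtrPfx_le_eps_normx2_thm} separately in each case. In both cases I transfer the finitely many certified inequalities~\eqref{eqn:grid_const} to all of $\Omega$ by a Taylor argument whose error is controlled by $L_{1,f}$, $L_{2,f}$ and the radii $\alpha,\beta$. Writing $\phi(\vcx):=\vcx^\tr\mx{P}f(\vcx)$, the target is $\phi(\vcx)\le-\epsilon\|\vcx\|^2$ on $\Omega$; the point $\vcx=\zero$ is trivial, so take $\vcx\neq\zero$. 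Throughout I use that the relevant segments lie in $\Omega$ (automatic when $\Omega$ is convex, e.g.\ a sublevel set of the quadratic Lyapunov function), so the suprema~\eqref{eqn:L1fL2f} dominate the Taylor remainders.

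\emph{Near the equilibrium, $\vcx\in\Bcal(\zero,\beta)$.} Since $f(\zero)=\zero$, I would expand $f(\vcx)=\mx{D}f(\zero)\vcx+R(\vcx)$ with $\|R(\vcx)\|\le\tfrac12 L_{2,f}\|\vcx\|^2$ from the integral remainder and the definition of $L_{2,f}$. Then $\phi(\vcx)=\tfrac12\vcx^\tr(\mx{P}\mx{D}f(\zero)+\mx{D}f(\zero)^\tr\mx{P})\vcx+\vcx^\tr\mx{P}R(\vcx)$; the quadratic term is $\le-\|\vcx\|^2$ by the Jacobian condition in~\eqref{eqn:grid_const}, and the remainder contributes at most $\tfrac12 L_{2,f}\|\mx{P}\|\,\|\vcx\|^3$. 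Hence $\phi(\vcx)\le-\|\vcx\|^2\big(1-\tfrac12 L_{2,f}\|\mx{P}\|\,\|\vcx\|\big)$, and $\|\vcx\|<\beta\le\frac{1-\epsilon}{L_{2,f}\|\mx{P}\|}$ keeps the bracket at least $\epsilon$, which gives the claim.

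\emph{Near a grid point, $\vcx\in\Bcal(\vc{z}_i,\alpha\|\vc{z}_i\|)$.} Writing $\vcx=\vc{z}_i+\vc{d}$ with $\|\vc{d}\|<\alpha\|\vc{z}_i\|$ and $f(\vcx)=f(\vc{z}_i)+\mx{E}$, $\|\mx{E}\|\le L_{1,f}\|\vc{d}\|$, I expand $(\vc{z}_i+\vc{d})^\tr\mx{P}(f(\vc{z}_i)+\mx{E})=\phi(\vc{z}_i)+\vc{z}_i^\tr\mx{P}\mx{E}+\vc{d}^\tr\mx{P}f(\vc{z}_i)+\vc{d}^\tr\mx{P}\mx{E}$. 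Using $\phi(\vc{z}_i)\le-\|\vc{z}_i\|^2$ from~\eqref{eqn:grid_const}, the bound $\|f(\vc{z}_i)\|\le L_{1,f}\|\vc{z}_i\|$ (again from $f(\zero)=\zero$), and $\|\vc{d}\|<\alpha\|\vc{z}_i\|$ on each cross term, I expect $\phi(\vcx)\le-\|\vc{z}_i\|^2\big(1-(2\alpha+\alpha^2)L_{1,f}\|\mx{P}\|\big)$. Finally, since $\|\vcx\|<(1+\alpha)\|\vc{z}_i\|$, I replace $\|\vc{z}_i\|^2$ by $\|\vcx\|^2/(1+\alpha)^2$ and require the resulting coefficient of $-\|\vcx\|^2$ to be at least $\epsilon$; rearranging $\frac{1-(2\alpha+\alpha^2)L_{1,f}\|\mx{P}\|}{(1+\alpha)^2}\ge\epsilon$ collapses to exactly $(1+\alpha)^2\le\frac{1+L_{1,f}\|\mx{P}\|}{\epsilon+L_{1,f}\|\mx{P}\|}$, i.e.\ the bound on $\alpha$ in~\eqref{eqn:alpha_beta}.

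The two Taylor estimates are routine; the step that needs care is the grid-point case, where the three cross terms must be grouped so that the factor $2\alpha+\alpha^2=(1+\alpha)^2-1$ emerges cleanly, and where the passage from $\|\vc{z}_i\|$ to $\|\vcx\|$ (costing the factor $(1+\alpha)^{-2}$) is precisely what forces the algebraic form of the $\alpha$-bound. The only non-routine hypothesis is the convexity/segment condition noted above, used so that $L_{1,f},L_{2,f}$ genuinely bound the remainders; if $\Omega$ is not convex one simply computes these constants over the union of the covering balls.
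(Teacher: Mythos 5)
Your proof is correct and follows essentially the same route as the paper's: the same two-case split over the covering balls in Definition~\ref{def:alpha_beta_grid}, the same Taylor/Jacobian argument on $\Bcal(\zero,\beta)$, and the same Lipschitz decomposition of $\vcx^\tr\mx{P}f(\vcx)$ into the certified grid term plus three cross terms, yielding the identical factor $1-(2\alpha+\alpha^2)L_{1,f}\|\mx{P}\|$ and the identical rearrangement to $(1+\alpha)^2\le(1+L_{1,f}\|\mx{P}\|)/(\epsilon+L_{1,f}\|\mx{P}\|)$. Your side remark that the suprema in \eqref{eqn:L1fL2f} bound the remainders only along segments contained in $\Omega$ (hence the convexity or enlarged-domain caveat) is a fair point that the paper leaves implicit, but it does not alter the argument.
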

\begin{proof}
Let $\vc{x}\in\Omega$. Since $\Zcal$ is an $(\alpha,\beta)$-grid, then by \eqref{eqn:grid}, we know that $\vc{x}$ either belongs to  $\Bcal(\zero,\beta)$, or it belongs to $\Bcal(\vc{z}_i,\alpha\|\vc{z}_i\|)$, for some $1\le i\le\nG$.\\
\underline{Case I:} Assume that $\vc{x}\in\Bcal(\zero,\beta)$, i.e., $\|\vc{x}\|<\beta$. Since, $f$ is $C^2(\Ucal,\Rbb_{\ge 0}^n)$ and $f(\zero)=\zero$, from Taylor expansion at origin, we have that
\begin{equation}
f(\vc{x}) = \mx{D}f(\zero)\vc{x} + r(\vc{x}),
\end{equation}
where $r:\Ucal\to\Rbb^n$ is a $C^2(\Ucal,\Rbb^n)$ function such that
\begin{equation}\label{eqn:L2f_remainder}
\sup_{\vc{x}\in\Omega, \vc{x}\ne  \zero}\frac{\|r(\vc{x})\|}{{\|\vc{x}\|^2}}\le L_{2,f}.
\end{equation}
Accordingly, one can easily see that
\begin{equation*}
\vc{x}^\tr\mx{P}f(\vc{x}) =
\frac{1}{2}\vc{x}^\tr
\big{(}\mx{P}\mx{D}f(\zero)+\mx{D}f(\zero)^\tr\mx{P}\big{)}\vc{x} + \vc{x}^\tr \mx{P} r(\vc{x}).
\end{equation*}
Due to  \eqref{eqn:grid_const}, \eqref{eqn:L2f_remainder}, and the Cauchy-Schwartz inequality, we have
\begin{equation*}
\vc{x}^\tr\mx{P}f(\vc{x}) 
\le 
-\|\vc{x}\|^2
+ 
\|\mx{P}\|L_{2,f} \|\vc{x}\|^3.
\end{equation*}
According to \eqref{eqn:alpha_beta} and since $\|x\|< \beta$, we have
\begin{equation*}
\vc{x}^\tr\mx{P}f(\vc{x})\le -\epsilon\|\vc{x}\|^2.
\end{equation*}
\underline{Case II:} Assume that  $\vc{x}\in\Bcal(\vc{z}_i,\alpha \|\vc{z}_i\|)$, i.e., $\vc{x} = \vc{z}_i + \vc{a}$ where $\vc{a}$ is a vector such that $\|\vc{a}\|<\alpha \|\vc{z}_i\|$.
From the triangle inequality, we have that
\begin{equation}\label{eqn:_eps_1_alpha2_normzi}
-\epsilon\|\vc{z}_i\|^2 (1+\alpha)^2
\le
-\epsilon(\|\vc{z}_i\| + \|\vc{a}\|)^2
\le -\epsilon\|\vc{x}\|^2.
\end{equation}
Due to \eqref{eqn:L1fL2f}, $L_{1,f}$ is a Lipschitz constant for $f$.
Therefore, we have
\begin{equation}
\|f(\vc{z}_i + \vc{a})-f(\vc{z}_i )\| \le L_{1,f} \|\vc{a}\|,
\label{eqn:L1f_ineq_1}
\end{equation}
and
\begin{equation}
\|f(\vc{z}_i + \vc{a})\|\le 
L_{1,f} \big{(}\|\vc{z}_i\|+\|\vc{a}\|\big{)}, 
\label{eqn:L1f_ineq_2}
\end{equation}
where the second inequality follows from $f(\zero)=\zero$ and the triangle inequality.
From \eqref{eqn:grid_const} and the Cauchy-Schwartz inequality,
we have 
\begin{equation*}
\begin{split}
\vc{x}^\tr\mx{P}f(\vc{x}) 
&=\vc{z}_i^\tr\mx{P}f(\vc{z}_i) 
+\vc{z}_i^\tr\mx{P}(f(\vc{z}_i + \vc{a})-f(\vc{z}_i ))
+\vc{a}^\tr\mx{P}f(\vc{z}_i + \vc{a})
\\ 
&\le -\|\vc{z}_i\|^2 + \|\mx{P} \vc{z}_i\| \|f(\vc{z}_i + \vc{a})-f(\vc{z}_i )\|
+
\|\mx{P}\vc{a}\|\|f(\vc{z}_i + \vc{a})\|.
\end{split}
\end{equation*}
Note that $\|\mx{P}\vc{a}\|\le\|\mx{P}\|\|\vc{a}\|$ and $\|\mx{P}\vc{z}_i\|\le\|\mx{P}\|\|\vc{z}_i\|$, for any $i=1,\ldots,\nG$. 
Therefore, from \eqref{eqn:L1f_ineq_1} and \eqref{eqn:L1f_ineq_2}, 
we have 
\begin{equation*}
\begin{split}
\vc{x}^\tr\mx{P}f(\vc{x}) 
&\le 
-\|\vc{z}_i\|^2 + \|\mx{P}\| \|\vc{z}_i\| \|f(\vc{z}_i + \vc{a})-f(\vc{z}_i )\|
+
\|\mx{P}\|\|\vc{a}\|\|f(\vc{z}_i + \vc{a})\|\\ 
&\le 
-\|\vc{z}_i\|^2 + \|\mx{P}\|\|\vc{z}_i\|\ L_{1,f}\|\vc{a}\|
+
\|\mx{P}\|\|\vc{a}\| \ L_{1,f}(\|\vc{z}_i\|+\|\vc{a}\|).
\end{split}
\end{equation*}
Since $\|\vc{a}\|<\alpha\|\vc{z}_i\|$, one can conclude that
\begin{equation}\label{eqn:xtrPfx_le_normz_i2_blahblah}
\vc{x}^\tr\mx{P}f(\vc{x}) 
< -\|\vc{z}_i\|^2 (1 - 2\|\mx{P}\|L_{1,f}\alpha 
-\|\mx{P}\|L_{1,f}\alpha^2).
\end{equation}
From \eqref{eqn:alpha_beta}, one can see that
\begin{equation}\label{eqn:alpha_eps_ineq}
1 - 2\|\mx{P}\|L_{1,f}\alpha 
-\|\mx{P}\|L_{1,f}\alpha^2 \ge \epsilon(1+\alpha)^2.
\end{equation}
Due to \eqref{eqn:_eps_1_alpha2_normzi}, 
\eqref{eqn:xtrPfx_le_normz_i2_blahblah},
and \eqref{eqn:alpha_eps_ineq}, we have that
\begin{equation}
\vc{x}^\tr\mx{P}f(\vc{x})
<
-\epsilon(1+\alpha)^2\|\vc{z}_i\|^2
\le -\epsilon\|\vc{x}\|^2.
\end{equation}
This concludes the proof.
\end{proof}
\begin{remark}
	Theorem \ref{thm:grid_sufficient} guarantees that in order to satisfy the infinite number of constraints given in \eqref{eqn:xtrPfx_le_eps_normx2}, it is enough to satisfy the finite number of constraints introduced in \eqref{eqn:grid_const}, given a suitable $(\alpha,\beta)$-grid for $\Omega$. 
\end{remark}
\begin{remark}
	One can verify that if \eqref{eqn:xtrPfx_le_eps_normx2_thm} holds for a positive-definite matrix $\mx{P}$, then a scaled version of $\mx{P}$ satisfies the inequalities given in \eqref{eqn:grid_const}. 
	This can be interpreted as the other direction of Theorem \ref{thm:grid_sufficient}.	
\end{remark}
%----------------------------------------------------
\begin{theorem} \label{thm:grid_existence}
	For any $\alpha,\beta>0$, an $(\alpha,\beta)$-grid exists for $\Omega$.
\end{theorem}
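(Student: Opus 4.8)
The plan is to reduce the statement to a routine compactness argument, the only subtlety being that the covering balls $\Bcal(\vcx,\alpha\|\vcx\|)$ degenerate to points as $\vcx\to\zero$. This is precisely what the ball $\Bcal(\zero,\beta)$ around the origin is there to absorb. Accordingly, I would first excise a neighborhood of the origin and then cover what remains.

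First I would set $K:=\Omega\setminus\Bcal(\zero,\beta)$. Since $\Bcal(\zero,\beta)$ is open and $\Omega$ is compact, $K$ is a closed subset of a compact set, hence itself compact. By construction every $\vcx\in K$ satisfies $\|\vcx\|\ge\beta$, so the open ball $\Bcal(\vcx,\alpha\|\vcx\|)$ has radius at least $\alpha\beta>0$; in particular it is nonempty and contains its own center $\vcx$. Consequently the family $\{\Bcal(\vcx,\alpha\|\vcx\|)\mid \vcx\in K\}$ is an open cover of $K$. Invoking compactness of $K$, I would extract a finite subcover indexed by points $\vc{z}_1,\ldots,\vc{z}_{\nG}\in K$, so that $K\subseteq\bigcup_{i=1}^{\nG}\Bcal(\vc{z}_i,\alpha\|\vc{z}_i\|)$. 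Because $K\subseteq\Omega\setminus\{\zero\}$, the finite set $\Zcal:=\{\vc{z}_1,\ldots,\vc{z}_{\nG}\}$ is a legitimate candidate grid, i.e.\ $\Zcal\subset\Omega\setminus\{\zero\}$ as required by Definition~\ref{def:alpha_beta_grid}.

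Finally I would assemble the two pieces. Writing $\Omega\subseteq K\cup\Bcal(\zero,\beta)$ and substituting the finite subcover of $K$ yields
\[
\Omega\subseteq K\cup\Bcal(\zero,\beta)\subseteq\left(\bigcup_{i=1}^{\nG}\Bcal(\vc{z}_i,\alpha\|\vc{z}_i\|)\right)\cup\Bcal(\zero,\beta),
\]
which is exactly the containment \eqref{eqn:grid}, establishing that $\Zcal$ is an $(\alpha,\beta)$-grid for $\Omega$. The only point requiring care---and the sole place where the hypothesis structure is genuinely used---is guaranteeing that the covering balls have strictly positive radius so that they form an \emph{open} cover; this fails near the origin, which is why passing to $K=\Omega\setminus\Bcal(\zero,\beta)$ (where $\|\vcx\|\ge\beta>0$) is essential. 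Everything else is a direct application of the Heine--Borel property of $\Omega$.
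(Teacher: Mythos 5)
Your proof is correct and takes essentially the same route as the paper's: a Heine--Borel finite-subcover argument applied to the open cover by the balls $\Bcal(\vc{z},\alpha\|\vc{z}\|)$ together with $\Bcal(\zero,\beta)$. The only (cosmetic) difference is that you first excise $\Bcal(\zero,\beta)$ and cover the compact remainder, which makes explicit why the covering balls have positive radius near the origin, whereas the paper covers all of $\Omega$ in one step.
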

\begin{proof}
Since 
\begin{equation}\label{eqn:Omega_cover}
\Omega\subseteq\cup_{\vc{z}\in\Omega,\vc{z}\ne \zero} \Bcal(\vc{z},\alpha\|\vc{z}\|)\cup\Bcal(\vc{z},\beta)
\end{equation}
and $\Omega$ is a compact set, this open cover  has a finite sub-cover.
Hence, there exist $\{\vc{z}_1,\ldots,\vc{z}_{\nG}\}\subset\Omega\backslash\{\zero\}$ such that \eqref{eqn:grid} holds.	
\end{proof}
%----------------------------------------------------
\begin{remark}
	For the given $\Omega$, there are infinite choices for $(\alpha,\beta)$-grids.
	Moreover, one can see that taking small values for $\alpha$ and $\beta$ results in fine and large $(\alpha,\beta)$-grid.  	
\end{remark}
%_________________________________________________________
%_________________________________________________________
%_________________________________________________________
\subsection{Identifying the Dynamics in the Smooth Vector-valued Reproducing Kernel Hilbert Spaces}
The function class taken for approximating the unknown vector field is a type of Hilbert spaces called {\em smooth vector-valued reproducing kernel Hilbert spaces} (SVRKHS) which are introduced below (see \cite{carmeli2006vector,zhou2008derivative,singh2019learning} for more details). Based on the suitable structure of SVRKHS, we will prove 
that the problem admits a solution with a specific finite linear parametric form. This allows reducing the optimization problem to the coefficients of this representation and subsequently, a tractable finite-dimensional optimization problem is obtained.

Let $\Xcal$ be a compact subset of $\Rbb^n$ with non-empty interior $\Xcal^o$ such that $\Xcal$ is the closure of $\Xcal^o$  %\cite{zhou2008derivative} 
and  $\Omega\subset\Xcal^o$.
\begin{definition}\label{def:SVRKHS}
A Smooth Vector-valued Reproducing Kernel Hilbert Space (SVRKHS), denoted by $\Hcal$, is a Hilbert space of functions $g\in C^{s}(\Xcal,\Rbb^n)$ such that 
for any $\vcx\in \Xcal$, 
%there exists a real positive constant $c_{\vcx}$ where 
we have $\sup_{g\in\Hcal,\|g\|_{\Hcal}\le 1}\|g(\vcx)\|<\infty$.
\end{definition}
\begin{definition}\label{def:Kernel}
The function $\Kernel\in C^{2s}(\Xcal\times\Xcal,\Rbb^{n\times n})$ is an {\em operator-valued positive-definite Mercer kernel} \cite{singh2019learning} when for any $m\in\Nbb$, $\vcx,\vcy,\vcx_1,\ldots,\vcx_m\in\Xcal$ and $\vca_1,\ldots,\vca_m\in\Rbb^n$, we have $\Kernel(\vcy,\vcx)=\Kernel(\vcx,\vcy)^\tr$ and
$\sum_{1\le i,j \le m} \vca_i^\tr\Kernel(\vcx_i,\vcx_j)\vca_j \ge  0$.
\end{definition}
For any $\vcx\in\Xcal$, let $\Kernel_{\vcx}$ denote the function defined by $\Kernel(\vcx,\cdot):\Xcal\to\Rbb^n$.
This is called the {\em section} of kernel $\Kernel$ at $\vcx$ or the {\em feature map}.
\begin{theorem}[\cite{singh2019learning}]\label{thm:RKHS_rep}
With respect to any Mercer kernel $\Kernel\in C^{2s}(\Xcal\times\Xcal,\Rbb^{n\times n})$, there exists a SVRKHS of functions $g\in C^{s}(\Xcal,\Rbb^n)$, denoted by $\HK$ and endowed with inner product $\inner{\cdot}{\cdot}_{\HK}$ and norm $\|\cdot\|_{\HK}$, such that for any $(\vcx,\vcy)\in\Xcal\times \Rbb^n$ and  for any $\alpha\in\Zbbp^n$ with $|\alpha|\le s$, we have
\begin{itemize}
	\item[i)] $\partial_{\alpha}^1 \Kernel_{\vcx}\vcy\in\HK$, and
	\item[ii)] $\inner{g}{\partial_{\alpha}^1 \Kernel_{\vcx}\vcy}_{\HK}=\vcy^\tr \partial_{\alpha}g(\vcx)$, for all $g\in\HK$.
\end{itemize} 
The second feature is called the {\em reproducing property}.
\end{theorem}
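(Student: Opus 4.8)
The plan is to construct $\HK$ explicitly as the completion of the span of kernel sections, à la Moore--Aronszajn, and then to bootstrap the derivative reproducing property from the order-zero one by a difference-quotient argument. First I would form the pre-Hilbert space $\HK_0 := \mathrm{span}\{\Kernel_{\vcx}\vcy \mid \vcx\in\Xcal,\ \vcy\in\Rbb^n\}$ and equip it with the bilinear form determined by $\inner{\Kernel_{\vcx}\vca}{\Kernel_{\vcx'}\vcb}_{\HK} := \vca^\tr\Kernel(\vcx,\vcx')\vcb$, extended bilinearly. The positive-definiteness condition in Definition~\ref{def:Kernel} is precisely what guarantees this form is positive semidefinite, and the symmetry $\Kernel(\vcy,\vcx)=\Kernel(\vcx,\vcy)^\tr$ makes it symmetric; the Cauchy--Schwarz inequality for the form then shows that its null space is a subspace, so quotienting and completing yields a genuine Hilbert space. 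The order-zero reproducing property $\inner{g}{\Kernel_{\vcx}\vcy}_{\HK} = \vcy^\tr g(\vcx)$ holds by construction on $\HK_0$ and extends by continuity; this in turn shows, via Cauchy--Schwarz, that each point-evaluation functional $g\mapsto \vcy^\tr g(\vcx)$ is bounded, which is the defining property of a SVRKHS in Definition~\ref{def:SVRKHS}.

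The substance of the theorem is the case $|\alpha|\ge 1$. Here I would fix a multi-index $\alpha$ with $|\alpha|\le s$ and study the difference quotients approximating $\partial_\alpha^1\Kernel_{\vcx}\vcy$; concretely, for a single coordinate direction one forms $u_h := h^{-1}(\Kernel_{\vcx+h\vc{e}_i}\vcy - \Kernel_{\vcx}\vcy)\in\HK_0$ and then iterates the construction to reach the full order $\alpha$. The key computation is that the inner product of two such elements is expressible purely through the kernel, $\inner{u_h}{u_{h'}}_{\HK} = h^{-1}h'^{-1}\big(\text{second difference of }\Kernel\big)$; because $\Kernel\in C^{2s}(\Xcal\times\Xcal,\Rbb^{n\times n})$, the mixed partial $\partial_\alpha^1\partial_\alpha^2\Kernel$ exists and is continuous, and this lets me show that $\{u_h\}$ is Cauchy in $\HK$ and that its limit, denoted $\partial_\alpha^1\Kernel_{\vcx}\vcy$, lies in $\HK$, proving claim (i). Claim (ii) then follows by passing to the limit in the order-zero reproducing property: $\inner{g}{u_h}_{\HK} = h^{-1}\big(\vcy^\tr g(\vcx+h\vc{e}_i)-\vcy^\tr g(\vcx)\big)$, whose right-hand side converges to $\vcy^\tr\partial_i g(\vcx)$ since $g\in C^s$, while the left-hand side converges to $\inner{g}{\partial_i^1\Kernel_{\vcx}\vcy}_{\HK}$ by continuity of the inner product; iterating over the coordinates recorded in $\alpha$ yields $\vcy^\tr\partial_\alpha g(\vcx)$.

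The main obstacle I anticipate is twofold, and both parts hinge on the $C^{2s}$ regularity of the kernel, i.e.\ twice the smoothness demanded of the functions. First, I must justify that the abstract completion $\HK$ actually consists of $C^s$ functions rather than mere equivalence classes: boundedness of the derivative functionals $g\mapsto \vcy^\tr\partial_\alpha g(\vcx)$, established above, shows they extend continuously to all of $\HK$, and that norm-convergence in $\HK$ forces uniform convergence of functions together with their derivatives up to order $s$ on $\Xcal$ (using compactness of $\Xcal$ and continuity of $\partial_\alpha^1\partial_\alpha^2\Kernel$ to get a uniform bound $\sup_{\vcx}\|\partial_\alpha^1\Kernel_{\vcx}\| <\infty$), so limits are genuinely $C^s$ and the identification in Definition~\ref{def:SVRKHS} is legitimate. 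Second, the Cauchy estimate for the difference quotients requires controlling $\inner{u_h-u_{h'}}{u_h-u_{h'}}_{\HK}$ uniformly, which reduces to showing that the relevant second divided differences of $\Kernel$ converge to $\partial_\alpha^1\partial_\alpha^2\Kernel$ evaluated on the diagonal; here I would invoke uniform continuity of the top-order mixed partials on the compact set $\Xcal\times\Xcal$, which is exactly the content of the $C^{2s}$ hypothesis. Everything else is routine bilinear bookkeeping.
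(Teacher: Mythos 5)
The paper does not prove this theorem: it is imported verbatim from the cited reference, so there is no in-paper argument to compare against. Your blind reconstruction is the standard one from that literature (Moore--Aronszajn completion of $\mathrm{span}\{\Kernel_{\vcx}\vcy\}$, followed by the difference-quotient argument of Zhou's derivative-reproducing-property proof, with the $C^{2s}$ hypothesis used exactly where you say it is -- to make $\langle u_h,u_{h'}\rangle_{\HK}$ a second divided difference converging to $\vcy^\tr\partial_\alpha^1\partial_\alpha^2\Kernel(\vcx,\vcx)\vcy$, and to get the uniform bound $\sup_{\vcx}\|\partial_\alpha^1\Kernel_{\vcx}\vcy\|_{\HK}<\infty$ that upgrades norm convergence to $C^s$ convergence); it is correct, and you correctly identify the two genuine technical obstacles. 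The only loose end worth noting is that $\Xcal$ is compact and difference quotients $\vcx+h\vc{e}_i$ may leave $\Xcal$ at boundary points, so the argument should be run for $\vcx\in\Xcal^o$ and extended to $\partial\Xcal$ by the same Cauchy-in-$\HK$ limit, consistent with how the paper defines $C^s(\Xcal,\Rbb^n)$ via continuous extension from the interior.
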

We suppose that the kernel $\Kernel$ is suitably chosen such that function $h:\Xcal\to \Rbb^n$, defined as $h(\vcx):=\vcx$, belongs to $\HK$. A simple example is $\Kernel(\vcx,\vcy)$ defined as $\kernel(\vcx,\vcy)\eye$ where $\kernel$ is a polynomial kernel. 
Also, let assume $\mxT \in \Sbb^n_{++}$ is a positive-definite 
finite-dimensional transformation such that
$\mxT f\in\HK$, i.e., $\mxT$ is a positive-definite change of coordinates on $\Rbb^n$ which transforms the vector filed $f$ to an element of $\HK$.
More precisely, we know that $g\in\HK$ where $g:\Omega \to \Rbb^n$ is defined as $g(\vcx) = \mxT\big{(} f(\vcx)\big{)}$, for any $\vcx\in\Omega$. For example $\mxT$ might be a scaling of the identity matrix. Note that this is mainly a technical assumption which is used later to simplify the mathematical arguments.

Let the {\em fitting loss} or the {\em error function}, denoted by $\Lscr_{\Dcal,\Zcal}$, be the function $\Lscr_{\Dcal}:\HK\to\Rbb$ defined as
\begin{equation}
\Lscr_{\Dcal}(f) := \sum_{i=1}^\nS\|\vc{y}_i-f(\vc{x}_i)\|_{\mxW}^2,
\end{equation} 
where $\mxW\in\Sbb_{++}^n$ is an error weighting matrix. 
Additionally, we can consider a suitable kernel-based regularization due to $\mxT f\in\HK$. More precisely, let the regularization function $\Rscr:\HK\to \Rbb_{\ge 0}$ be defined as $\Rscr(\mxT f)  :=  \|\mxT f\|_{\HK}^2$.
The identification problem is now formulated as following 
\begin{equation}\label{eqn:reg_f_original}
\min_{f\in\Ccal}
\ 
\sum_{i=1}^\nS\Lscr_{\Dcal,\Zcal}(f) + \lambda\Rscr(\mxT f)
\end{equation}
where $\lambda>0$ is the regularization weight and $\Ccal$ is the set of smooth vector fields such that $\vc{x}=0$ is a stable equilibrium point and  attractive in the region $\Omega$, and also, for any $f\in\Ccal$, we have that $\mxT f\in\HK$. 
Note that \eqref{eqn:reg_f_original}
is a non-convex optimization problem with an infinite-dimensional feasible set and infinite  number of constraints.
In the followings, we show that this problem has a tractable reformulation.

Due to Theorem \ref{thm:grid_sufficient}, for imposing the stability feature given in \eqref{eqn:xtrPfx_le_eps_normx2}, it is sufficient to take a suitable $(\alpha,\beta)$-grid, like $\Zcal = \{\vc{z}_1,\ldots,\vc{z}_{\nG}\}$, and  solve optimization problem over the grid,
\begin{equation}\label{eqn:reg_f}
\begin{array}{cl}
\minOp_{f\in C^2(\Ucal,\Rbb^2),\ \mx{P}\in\Sbb_{++}^{n}}
&\sumOp_{i=1}^\nS\|\vc{y}_i-f(\vc{x}_i)\|_{\mxW}^2 + \lambda \|\mxT f\|^2_{\HK}
\\
\mathrm{s.t.}
&
\vc{z}_i^\tr\mx{P}f(\vc{z}_i)\le -\|\vc{z}_i\|^2, 
\qquad\qquad \forall i = 1,\ldots,\nG,
\\&
\frac{1}{2}(\mx{P}\mx{D}f(\zero)+\mx{D}f(\zero)^\tr \mx{P}) \preceq -\eye,
\\& 
f(\zero) = \zero,
\\& 
\mxT f\in\HK,
\\& 
\mx{P}\succeq\eye.
\end{array}
\end{equation}
The existence of such a grid is guaranteed by Theorem \ref{thm:grid_existence}.
Rewriting  optimization problem \eqref{eqn:reg_f} in terms of $g$, one has
\begin{equation}\label{eqn:reg_g}
\begin{array}{cl}
\minOp_{g\in\HK,\ \mx{P}\succeq\eye}
&
\
\sumOp_{i=1}^\nS\|\vc{y}_i-\mxT^{-1}g(\vc{x}_i)\|_{\mxW}^2 + \lambda \|g\|^2_{\HK}
\\
\mathrm{s.t.}
&
\vc{z}_i^\tr\mx{P}\mxT^{-1}g(\vc{z}_i)\le -\|\vc{z}_i\|^2, \qquad\qquad \forall i = 1,\ldots,\nG,
\\&
\frac{1}{2}(\mx{P}\mxT^{-1}\mx{D}g(\zero)+\mx{D}g(\zero)^\tr\mxT^{-1}
\mx{P}) \preceq -\eye,
\\& 
g(\zero) = \zero.
\end{array}
\end{equation}
The problem \eqref{eqn:reg_g} in a non-convex infinite-dimensional optimization and therefore, it is not tractable. However, in order to address this issue, we derive a finite dimensional problem equivalent to \eqref{eqn:reg_g}.

With respect to a given $\mxP\in\Sbb_{++}^n$, we define $\Fcal_{\mxP}$ as
\begin{equation}\label{eqn:FP}
\begin{split}
\Fcal_{\mxP} := \bigg{\{}g\in\HK
\ \Big{|}\ &
\vc{z}_i^\tr\mx{P}\mxT^{-1}g(\vc{z}_i)
\le -\|\vc{z}_i\|^2, 
\ \forall i = 1,\ldots,\nG,
\\&
\frac{1}{2}(\mx{P}\mxT^{-1}\mx{D}
g(\zero)+\mx{D}g(\zero)^\tr\mxT^{-\tr}
\mx{P}) \preceq  -\eye,
%\\& 
\quad g(\zero) = \zero
\bigg{\}}.
\end{split}
\end{equation}
\begin{theorem}\label{thm:FP_nonempty_closed_convex}
For any $\mxP\succeq\eye$, the set $\Fcal_{\mxP}$ is a non-empty, closed and convex subset of $\HK$.	
\end{theorem}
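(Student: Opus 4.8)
The plan is to treat the three properties separately, since all of them reduce to one observation: the constraints defining $\Fcal_{\mxP}$ are built from the point-evaluation map $g\mapsto g(\vcx)$ and the derivative-evaluation map $g\mapsto\mx{D}g(\zero)$, and both are \emph{bounded linear} operators on $\HK$. First I would record this. Definition~\ref{def:SVRKHS} gives $\sup_{\|g\|_{\HK}\le1}\|g(\vcx)\|<\infty$, so $g\mapsto g(\vcx)$ is a bounded linear operator $\HK\to\Rbb^n$; likewise, applying the reproducing property of Theorem~\ref{thm:RKHS_rep} with multi-indices $|\alpha|=1$ shows that each entry of $\mx{D}g(\zero)$ equals $\inner{g}{\partial_{\alpha}^1\Kernel_{\zero}\vcy}_{\HK}$ for a suitable $\vcy$, hence is a bounded linear functional, so $g\mapsto\mx{D}g(\zero)$ is linear and continuous into $\Rbb^{n\times n}$.

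Convexity then follows immediately. Composing the above linear maps with the fixed matrices $\mxP$ and $\mxT^{-1}$ shows that $g\mapsto\vc{z}_i^\tr\mxP\mxT^{-1}g(\vc{z}_i)$ is linear, so each of the first $\nG$ constraints cuts out a half-space; the map $g\mapsto\frac{1}{2}(\mxP\mxT^{-1}\mx{D}g(\zero)+\mx{D}g(\zero)^\tr\mxT^{-\tr}\mxP)+\eye$ is affine into the symmetric matrices, so the matrix-inequality constraint is the preimage of the (convex) negative-semidefinite cone; and $g(\zero)=\zero$ defines a linear subspace. Thus $\Fcal_{\mxP}$ is a finite intersection of convex sets, hence convex. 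Closedness follows from the same decomposition with continuity replacing linearity: each constraint set is the preimage of a closed set (a closed ray $(-\infty,-\|\vc{z}_i\|^2]$, the closed PSD cone, and the point $\zero$, respectively) under a continuous map, and a finite intersection of closed sets is closed.

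The substantive part is non-emptiness, and here I would exhibit an explicit feasible point, namely $g_0:=-\mxT h$, i.e. $g_0(\vcx)=-\mxT\vcx$. Then $\mxT^{-1}g_0(\vcx)=-\vcx$ and $\mx{D}g_0(\zero)=-\mxT$, so $\mxT^{-1}\mx{D}g_0(\zero)=-\eye$. I would verify the three constraints directly: $g_0(\zero)=\zero$; the $i$-th inequality reads $\vc{z}_i^\tr\mxP\mxT^{-1}g_0(\vc{z}_i)=-\vc{z}_i^\tr\mxP\vc{z}_i\le-\|\vc{z}_i\|^2$ because $\mxP\succeq\eye$ forces $\vc{z}_i^\tr\mxP\vc{z}_i\ge\|\vc{z}_i\|^2$; and, using $\mxT=\mxT^\tr$, the left-hand side of the matrix inequality equals $\frac{1}{2}(-\mxP-\mxP)=-\mxP\preceq-\eye$, again by $\mxP\succeq\eye$. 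Hence $g_0$ satisfies every constraint.

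The one point that genuinely requires justification, and which I expect to be the main obstacle, is membership $g_0\in\HK$: we are given $h\in\HK$, but feasibility needs $\mxT h\in\HK$, i.e. invariance of $\HK$ under the fixed output transformation $\mxT$. For the diagonal kernels emphasized in the paper (e.g. $\Kernel=\kernel\eye$), $\HK$ consists of functions whose components lie in the scalar RKHS $\Hk$, and each component of $\mxT h$ is a linear combination of the components of $h$, hence still in $\Hk$; this closure under fixed linear maps of the output is exactly what is needed. I would therefore either derive $\mxT h\in\HK$ from the chosen kernel structure or fold it into the standing assumption that $\Kernel$ is ``suitably chosen'' so that $h$ and its linear output transforms lie in $\HK$. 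With $g_0\in\HK$ established, $\Fcal_{\mxP}\neq\emptyset$, completing the argument.
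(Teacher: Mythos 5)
Your convexity and closedness arguments are correct and are essentially the paper's own, just packaged more abstractly: the paper proves closedness by taking a convergent sequence $(g_k)\subset\Fcal_{\mxP}$ and using the reproducing property with Cauchy--Schwarz to get $\partial_{\alpha}g_k(\vcx)\to\partial_{\alpha}g(\vcx)$, which is precisely your observation that point evaluation and derivative evaluation are bounded linear maps, restated as ``preimages of closed convex sets under continuous linear maps.'' The genuine difference is the non-emptiness witness. The paper uses $g_\gamma(\vcx)=-2\gamma^{-1}\vcx$, a scalar multiple of $h$, with $\gamma$ below the smallest eigenvalue of $\mx{M}:=\frac{1}{2}(\mxP\mxT^{-1}+\mxT^{-\tr}\mxP)$; membership in $\HK$ is then free from $h\in\HK$, but the constraint verification leans on $\mx{M}$ being positive definite, which is a delicate point (the symmetric part of a product of two positive definite matrices need not be positive definite in general). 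Your witness $g_0=-\mxT h$ makes the constraint verification clean and unconditional --- you get exactly $-\vc{z}_i^\tr\mxP\vc{z}_i\le-\|\vc{z}_i\|^2$ and $-\mxP\preceq-\eye$ directly from $\mxP\succeq\eye$ --- so it sidesteps that subtlety entirely.

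The price is the one you flag yourself: $-\mxT h\in\HK$ does not follow from the paper's stated assumption $h\in\HK$, since a vector-valued RKHS need not be invariant under a fixed linear transformation of the output unless the kernel has suitable structure. For the diagonal kernels $\Kernel=\kernel\eye$ that the paper ultimately emphasizes, the invariance holds componentwise as you note, and your proof is then complete; for a general operator-valued $\Kernel$ you must add $\mxT h\in\HK$ to the standing assumptions. So the two witnesses are complementary: the paper's scalar multiple $-c\,h$ needs no extra membership assumption but requires $\mx{M}\succ 0$ (e.g.\ $\mxT$ a scalar multiple of the identity), while yours needs the slightly stronger kernel assumption but no spectral condition on $\mxP\mxT^{-1}$. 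State explicitly which assumption you are invoking and the argument is complete.
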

\begin{proof}
Since $\mxP$ and $\mxT$ are positive definite matrices, all of the eigenvalues of matrix $\mx{M}$ defined as $\mx{M}:=\frac{1}{2}(\mxP\mxT^{-1}+\mxT^{-\tr}\mxP)$ are strictly larger than zero.	
Let the function $g_\gamma:\Xcal\to\Rbb^n$ be defined as $g_\gamma(\vcx)=-2\gamma^{-1}\vcx$ where $\gamma$ is a positive real scalar smaller than smallest eigenvalue of $\mx{M}$. Since $-\frac{1}{2}\gamma g_\gamma\in\HK$, we know that $g_\gamma\in\HK$.
We have
\begin{equation*}
\vcz_i^\tr\mxP\mxT^{-1}g_\gamma(\vcz_i)=
-2\gamma^{-1}\vcz_i^\tr\mx{M}\vcz_i\le -\|\vc{z}_i\|^2,
\end{equation*}
and 
\begin{equation*}
\frac{1}{2}(\mxP\mxT^{-1}\mx{D}g_\gamma(\zero)+\mx{D}g_\gamma(\zero)^\tr\mxT^{-1}\mxP) = -2\gamma^{-1}\mx{M} \preceq -\eye.
\end{equation*}
Moreover, we know that $g_\gamma(\zero)=\zero$.	Therefore, $g_\gamma\in\Fcal_{\mxP}$ and thus, $\Fcal_{\mxP}$ is non-empty.
The convexity of $\Fcal_{\mxP}$ is due to the linear dependency of the left-hand sides of the constraints with respect to $g$. 
Now, let $(g_k)_{k=1}^\infty\in\Fcal_{\mxP}$ be a sequence converging to $g\in\HK$.
%, i.e., $\lim_{k\to \infty}\|g-g_k\|_{\HK}= 0$.   
Let $\vcy$ be an arbitrary vector in $\Rbb^n$.
Then, for any $\vcx\in\Xcal$ and any $\alpha\in\Zbbp^n$, due to the reproducing property and the Cauchy-Schwarz inequality, we have
\begin{equation*}
\begin{split}
|\vcy^\tr(\partial_{\alpha}g(\vcx)-\partial_{\alpha}g_k(\vcx))| 
&=
|\inner{g-g_k}{\partial_{\alpha}^1 \Kernel_{\vcx}\vcy}_{\HK}|
\\&\le 
\|g-g_k\|_{\HK}
\|\partial_{\alpha}^1 \Kernel_{\vcx}\vcy\|_{\HK}.
\end{split}
\end{equation*}
This shows that $\lim_{k\to \infty}\partial_{\alpha}g_k(\vcx)=\partial_{\alpha}g(\vcx)$. 
Therefore, we have 
\begin{equation*}
\begin{split}
&
\lim_{k\to \infty}g_k(\zero)=g(\zero),
%\\&
\qquad
\lim_{k\to \infty}\mx{D}g_k(\zero)=\mx{D}g(\zero),
\end{split}
\end{equation*}
and
\begin{equation*}
\lim_{k\to \infty}\partial_{\alpha}g_k(\vcz_i) =\partial_{\alpha}g(\vcz_i), \qquad \forall i=1,\ldots,\nG.
\end{equation*}
Since for any $k\in\Nbb$, $g_k$ satisfies the constraints and the left-hand sides of the constraints depend linearly on $g_k$ and $g$,  it follows that, the constraints are also satisfied by $g$, i.e., $g\in\Fcal_{\mxP}$. Hence, $\Fcal_{\mxP}$ is a closed subset of $\HK$.
\end{proof}
For ease of notation, 
define $p:=\nS+\nG, m := 1+p + n$, and also, set
$\vcx_0 = \zero$, and $\vcx_i = \vcz_{i-\nS}$, for $1+\nS\le i \le p$. 
\begin{theorem}\label{thm:min_ginFP_Risk_unique solution}
For any $\mxP\succeq\eye$ and $\lambda>0$, the optimization problem 
\begin{equation}\label{eqn:min_ginFP_loss}
\minOp_{g\in\Fcal_{\mxP}}\  \sum_{i=1}^\nS\|\vc{y}_i-\mxT^{-1}g(\vc{x}_i)\|_{\mxW}^2 + \lambda \|g\|^2_{\HK},
\end{equation}
has a {\em unique} solution, denoted by $g^*_{\mxP}$.
Moreover, there exist vectors $\{\vca_i\}_{i=0}^{p}$ and $\{\vcb_j\}_{j=1}^n$ such that, $g^*_{\mxP}$,
the solution of \eqref{eqn:min_ginFP_loss}, is in the  following form
\begin{equation}\label{eqn:rep_gstar_P}
g^*_{\mxP} 
= 
\sum_{i=0}^{p} \Kernel_{\vcx_i}\vca_i
+
\sum_{j=1}^{n} \partial_j^1\Kernel_{\zero}\vcb_j.
\end{equation}
\end{theorem}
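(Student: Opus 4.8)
The plan is to reduce the infinite-dimensional problem \eqref{eqn:min_ginFP_loss} to a finite-dimensional one by exploiting the fact that both the objective and the constraints defining $\Fcal_{\mxP}$ see $g$ only through a finite collection of bounded linear functionals, and then to invoke strong convexity. First I would introduce the finite-dimensional subspace of $\HK$ spanned by the kernel sections and derivative sections that carry this information, namely
\begin{equation*}
\Vcal := \mathrm{span}\big\{\Kernel_{\vcx_i}\vcy \ \big|\ 0\le i\le p,\ \vcy\in\Rbb^n\big\}
+
\mathrm{span}\big\{\partial_j^1\Kernel_{\zero}\vcy \ \big|\ 1\le j\le n,\ \vcy\in\Rbb^n\big\}.
\end{equation*}
By the reproducing property of Theorem~\ref{thm:RKHS_rep}, for every $g\in\HK$ and $\vcy\in\Rbb^n$ one has $\vcy^\tr g(\vcx_i)=\inner{g}{\Kernel_{\vcx_i}\vcy}_{\HK}$ and $\vcy^\tr\partial_j g(\zero)=\inner{g}{\partial_j^1\Kernel_{\zero}\vcy}_{\HK}$, so the point values $g(\vcx_0),\ldots,g(\vcx_p)$ and the Jacobian $\mx{D}g(\zero)$, which are exactly the quantities entering the fitting loss and every constraint of $\Fcal_{\mxP}$, are recovered by inner products against elements of $\Vcal$.

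Next I would write the orthogonal decomposition $g=g_{\Vcal}+g_{\perp}$ with $g_{\Vcal}\in\Vcal$ and $g_{\perp}\in\Vcal^{\perp}$. Because $g_{\perp}$ is orthogonal to every spanning element of $\Vcal$, the reproducing identities above give $g_{\perp}(\vcx_i)=\zero$ for $0\le i\le p$ and $\mx{D}g_{\perp}(\zero)=\zeromx$; hence $g$ and $g_{\Vcal}$ produce the same fitting loss and the same values in every constraint, so $g\in\Fcal_{\mxP}$ if and only if $g_{\Vcal}\in\Fcal_{\mxP}$. Since $\|g\|_{\HK}^2=\|g_{\Vcal}\|_{\HK}^2+\|g_{\perp}\|_{\HK}^2\ge\|g_{\Vcal}\|_{\HK}^2$, replacing any feasible $g$ by $g_{\Vcal}$ preserves feasibility while not increasing the objective, and strictly decreasing it unless $g_{\perp}=0$. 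Consequently the infimum over $\Fcal_{\mxP}$ equals the infimum over the finite-dimensional set $\Fcal_{\mxP}\cap\Vcal$, and every minimizer must lie in $\Vcal$.

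It then remains to solve the reduced problem over $\Fcal_{\mxP}\cap\Vcal$. This set is non-empty, since it contains the projection onto $\Vcal$ of the feasible function $g_\gamma$ built in the proof of Theorem~\ref{thm:FP_nonempty_closed_convex}, and it is closed and convex as the intersection of the closed convex set $\Fcal_{\mxP}$ (Theorem~\ref{thm:FP_nonempty_closed_convex}) with the subspace $\Vcal$. The objective is continuous, coercive since it dominates $\lambda\|g\|_{\HK}^2$, and strongly convex: the fitting term is a convex quadratic in the bounded-linear images $g\mapsto g(\vcx_i)$, while $\lambda\|g\|_{\HK}^2$ contributes strong convexity of modulus $2\lambda$. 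On the finite-dimensional space $\Vcal$ a continuous coercive function attains its infimum over a non-empty closed set, which yields existence; and if $g_1,g_2$ were two minimizers, then $\frac{1}{2}(g_1+g_2)\in\Fcal_{\mxP}\cap\Vcal$ together with strong convexity would force $\|g_1-g_2\|_{\HK}=0$, giving uniqueness. Writing the resulting minimizer $g^*_{\mxP}\in\Vcal$ in terms of the spanning elements and collecting, for each index, the $\Rbb^n$-coefficients into vectors $\vca_i$ and $\vcb_j$ produces exactly the representation \eqref{eqn:rep_gstar_P}.

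The main obstacle is the proper treatment of the Jacobian constraint $\frac{1}{2}(\mxP\mxT^{-1}\mx{D}g(\zero)+\mx{D}g(\zero)^\tr\mxT^{-\tr}\mxP)\preceq-\eye$: one must confirm that $\mx{D}g(\zero)$ is recovered exactly by inner products against the derivative sections $\partial_j^1\Kernel_{\zero}\vcy$, so that it is unaffected by $g_{\perp}$. This is precisely why the derivative reproducing property of Theorem~\ref{thm:RKHS_rep} is needed, and why the terms $\partial_j^1\Kernel_{\zero}\vcb_j$ must be included in $\Vcal$, and hence in \eqref{eqn:rep_gstar_P}, alongside the ordinary kernel sections.
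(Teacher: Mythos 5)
Your proposal is correct and follows essentially the same route as the paper: the representer-theorem argument via orthogonal decomposition of $\HK$ against the span of the kernel sections $\Kernel_{\vcx_i}\vcy$ and derivative sections $\partial_j^1\Kernel_{\zero}\vcy$, using the reproducing property to show that the orthogonal complement affects neither the fitting loss nor the constraints and can only increase $\|g\|_{\HK}$. The only (immaterial) difference is ordering: you reduce to the finite-dimensional set $\Fcal_{\mxP}\cap\Vcal$ first and then get existence from coercivity and uniqueness from strong convexity there, whereas the paper first obtains the unique minimizer in $\HK$ from the fact that a proper lower-semicontinuous strongly convex function on a Hilbert space has a unique minimizer, and only then applies the decomposition to that minimizer.
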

\begin{proof}
Define $\Jcal:\HK\to\Rbb\cup\{+\infty\}$ as
\begin{equation*}\label{eqn:J}
\Jcal(g) 
:= \sum_{i=1}^\nS\|\vc{y}_i-\mxT^{-1}g(\vc{x}_i)\|_{\mxW}^2 \ +\ \lambda \|g\|^2_{\HK} \ + \ \Ical_{\Fcal_{\mxP}}(g),
\end{equation*}
for any $g\in\HK$. 
According to Theorem \ref{thm:FP_nonempty_closed_convex}, $\Fcal_{\mxP}$ is a non-empty, closed, and convex set, and therefore  $\Ical_{\Fcal_{\mxP}}$ is a proper lower-semicontinuous convex function \cite{peypouquet2015convex}. 
Let $g_{\gamma}$ be the element of $\HK$ introduced in the proof of Theorem \ref{thm:FP_nonempty_closed_convex}.
Since $g_{\gamma}\in \Fcal_{\mxP}$, we have $\Ical_{\Fcal_{\mxP}}(g_{\gamma})=0$.
Also, we know that $\sum_{i=1}^\nS\|\vc{y}_i+2\gamma^{-1}\mxT^{-1}\vc{x}_i\|_{\mxW}^2<\infty$.
Therefore, $\sum_{i=1}^\nS\|\vc{y}_i-\mxT^{-1}g(\vc{x}_i)\|_{\mxW}^2$ is a proper and continuous convex function with respect to $g$. Since $\lambda>0$ and $\|g_\gamma\|_{\HK}<\infty$, we have that $\Jcal$ is a proper and lower-semicontinuous strongly convex function.
Therefore, $\min_{g\in\HK}\Jcal(g)$ has a unique (finite) solution \cite{peypouquet2015convex}, which means that \eqref{eqn:min_ginFP_loss} admits a unique solution with finite cost.  %This concludes the proof.
Define set $\Vcal\subseteq\HK$ as
\begin{equation*}\label{eqn:V}
\Vcal  := \bigg{\{}
\sum_{i=0}^{p} \Kernel_{\vcx_i}\vca_i
+
\sum_{j=1}^{n} \partial_j^1\Kernel_{\zero}\vcb_j 
\
\Big{|} 
\
\vca_0,\ldots,\vca_p,\vcb_1,\ldots,\vcb_n\in\Rbb^n\bigg{\}}.
\end{equation*}
This is a finite-dimensional subspace of $\HK$ and consequently, it is a closed subspace. Hence, one can decompose $g_{\mxP}^*$ as $g_{\mxP}^* = g_{\mxP}^{\parallel}+g_{\mxP}^{\perp}$ where  $g_{\mxP}^{\parallel}\in\Vcal$ and $g_{\mxP}^{\perp}\in\Vcal^{\perp}$.
Therefore,  for any $\vcy\in\Rbb^n$ and for any $i=0,\ldots,p$, from the reproducing property, we have $\vcy^\tr g_{\mxP}^{\perp}(\vcx_i) = \inner{g_{\mxP}^{\perp}}{\Kernel_{\vcx_i}\vcy}_{\HK}=0$ and subsequently, we have $\vcy^\tr g_{\mxP}^{\parallel}(\vcx_i) = \vcy^\tr g_{\mxP}^*(\vcx_i)$. Accordingly, one can conclude that $g_{\mxP}^{\parallel}(\vcx_i) = g_{\mxP}^*(\vcx_i)$, for any $i=0,\ldots,p$.
Similarly, due to the reproducing property, we have
\begin{equation*}
\begin{array}{rll}
\vcy^\tr \mx{D}g_{\mxP}^{\perp}(\zero) 
&=&
[
\vcy^\tr \partial^1_1 g_{\mxP}^{\perp}(\zero), 
\ldots, 
\vcy^\tr \partial^1_n g_{\mxP}^{\perp}(\zero) ]
\\ &= &
[
\inner{g^{\perp}_{\mxP} }{\partial^1_1\Kernel_{\zero}\vcy}_{\HK}, 
\ldots, \inner{g^{\perp}_{\mxP} }{\partial^1_n\Kernel_{\zero}\vcy}_{\HK}] 
\\&=& \zero^\tr,
\end{array}
\end{equation*}
which shows that $\mx{D}g_{\mxP}^{\perp}(\zero)$ is zero and subsequently, $\mx{D}g_{\mxP}^{\parallel}(\zero) =\mx{D}g_{\mxP}^*(\zero)$. As $g_{\mxP}^*\in\Fcal_{\mxP}$, it follows that $g_{\mxP}^{\parallel}\in\Fcal_{\mxP}$. Also, we have 
\begin{equation*}
\sum_{i=1}^\nS\|\vc{y}_i-\mxT^{-1}g^{\parallel}(\vc{x}_i)\|_{\mxW}^2 = \sum_{i=1}^\nS\|\vc{y}_i-\mxT^{-1}g(\vc{x}_i)\|_{\mxW}^2,
\end{equation*}
and 
\begin{equation*}
\|g\|_{\HK}^2 = \|g^{\parallel}\|_{\HK}^2 +\|g^{\perp}\|_{\HK}^2 \ge \|g^{\parallel}\|_{\HK}^2.
\end{equation*} 
%Since $\lambda>0$, 
We need to have $\|g^{\perp}\|_{\HK}^2=0$, otherwise $g^{\parallel}$ is a feasible solution with objective value strictly smaller than minimum of the objective function. This means that $g = g^{\parallel}\in\Vcal$ and $g^*_{\mxP}$ has the form given in \eqref{eqn:rep_gstar_P}.
\end{proof} 
For simplicity of notation, we define $\vca_i = \vcb_{i-p-1}$, for $m-n\le i \le m-1$, and vector $\vcab\in\Rbb^{nm}$  as
$\vcab:=[\vca_0^\tr\ldots\vca_{m-1}^\tr]^\tr$.

In the next theorem, we introduce a finite dimensional version of \eqref{eqn:reg_g}.
First, we need to introduce required notations.
Define matrix $\mxKb\in\Rbb^{mn\times mn}$ and $\mxKb_i\in\Rbb^{n\times mn}$ respectively as
$\mxKb:=[\mxKb_{i_1,i_2}]_{i_1=0,i_2=0}^{m-1,m-1}$ and 
$\mxKb_i:=[\mxKb_{i,0},\ldots,\mxKb_{i,m-1}]$, for any $i=0,\ldots,m$, 
where $\mxKb_{i_1,i_2}$ is given, for any $i_1,i_2=0,\ldots,m$, as
\begin{equation}\label{eqn:Ki1i2}
\mxKb_{i_1,i_2}  :=  
\begin{cases}
 \Kernel(\vcx_{i_2},\vcx_{i_1}),
&   
0\le i_1,i_2\le p,\\ 
 \partial^1_{i_2-p}\Kernel(\zero,\vcx_{i_1}),
&    
0\le i_1\le p<i_2< m,\\ 
 \partial^2_{i_1-p}\Kernel(\vcx_{i_2},\zero),
&    
0\le i_2\le p<i_1< m,\\
 \partial^2_{i_1-p}\partial^1_{i_2-p}\Kernel(\zero,\zero),
&    
p+1\le i_1,i_2< m.
\end{cases} 
\end{equation}
Define the function $\mxkb:\Xcal\to\Rbb^{n\times nm}$ as
\begin{equation}
\mxkb(\vcx) :=
\big{[}\Kernel_{\vcx_0}(\vcx), 
\dots, 
\Kernel_{\vcx_{p}}(\vcx),
\partial_1^1\Kernel_{\zero}(\vcx), 
\dots, 
\partial_n^1\Kernel_{\zero}(\vcx) 
\big{]}, \qquad \forall \vc{x}\in\Xcal.
\end{equation}
Based on this definition, one can easily see that $\mxKb_i= \mxkb(\vcx_i)$, for any $i=0,\ldots,m$.
Due to the reproducing property, we have the following proposition.
\begin{proposition}
	Let $\vcab\in\Rbb^{nm}$ and $g:\Xcal\to\Rbb^n$ be defined such that $g(\vcx)=\mxkb(\vcx)\vcab$, for any $\vc{x}\in\Xcal$. 
	Then, we have that $g\in\HK$ and 
	$\|g\|_{\HK}^2 =  \vcab^\tr \mxKb \vcab$.
\end{proposition}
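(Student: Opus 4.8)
The plan is to establish the two claims separately, both resting on the reproducing property of Theorem~\ref{thm:RKHS_rep} together with the kernel symmetry of Definition~\ref{def:Kernel}. First I would unpack the block structure of $\mxkb$ and $\vcab$ to write the pointwise formula $g(\vcx)=\mxkb(\vcx)\vcab$ as
\begin{equation*}
g(\vcx)=\sum_{i=0}^{p}\Kernel_{\vcx_i}(\vcx)\,\vca_i+\sum_{j=1}^{n}\partial_j^1\Kernel_{\zero}(\vcx)\,\vca_{p+j},
\end{equation*}
and then recognize the right-hand side as the pointwise evaluation of the candidate element $\tilde g:=\sum_{i=0}^{p}\Kernel_{\vcx_i}\vca_i+\sum_{j=1}^{n}\partial_j^1\Kernel_{\zero}\vca_{p+j}$ of $\HK$. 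The step that makes this identification legitimate is the auxiliary identity $(\partial_\alpha^1\Kernel_{\vcx}\vcy)(\vcz)=\partial_\alpha^1\Kernel(\vcx,\vcz)\vcy$, which I would derive by pairing $\partial_\alpha^1\Kernel_{\vcx}\vcy$ against an arbitrary section $\Kernel_{\vcz}\vc{w}$ through the reproducing property and invoking $\Kernel(\vcy,\vcx)=\Kernel(\vcx,\vcy)^\tr$. By Theorem~\ref{thm:RKHS_rep}(i) each summand $\Kernel_{\vcx_i}\vca_i$ and $\partial_j^1\Kernel_{\zero}\vca_{p+j}$ belongs to $\HK$, so $\tilde g\in\HK$ because $\HK$ is a linear space; and since point evaluation is bounded on $\HK$ (Definition~\ref{def:SVRKHS}), its elements are genuine functions, whence $g=\tilde g\in\HK$.

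For the norm identity I would use the unified indexing, writing $g=\sum_{i=0}^{m-1}\psi_i$ where the $i$-th term $\psi_i$ is $\Kernel_{\vcx_i}\vca_i$ for $0\le i\le p$ and $\partial_{i-p}^1\Kernel_{\zero}\vca_i$ for $p<i\le m-1$, and then expanding by bilinearity:
\begin{equation*}
\|g\|_{\HK}^2=\inner{g}{g}_{\HK}=\sum_{i_1=0}^{m-1}\sum_{i_2=0}^{m-1}\inner{\psi_{i_1}}{\psi_{i_2}}_{\HK}.
\end{equation*}
The crux is to evaluate each pairwise inner product and match it to $\vca_{i_1}^\tr\mxKb_{i_1,i_2}\vca_{i_2}$. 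I would apply the reproducing property (Theorem~\ref{thm:RKHS_rep}(ii)) to one factor, evaluated at the base point of the other, in the four cases of \eqref{eqn:Ki1i2}: the section--section pairing gives $\vca_{i_2}^\tr\Kernel(\vcx_{i_1},\vcx_{i_2})\vca_{i_1}$; the two mixed pairings produce single derivatives such as $\partial_{i_2-p}^2\Kernel(\vcx_{i_1},\zero)$; and the derivative--derivative pairing produces the mixed second derivative $\partial_{i_2-p}^2\partial_{i_1-p}^1\Kernel(\zero,\zero)$.

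The main obstacle is purely the bookkeeping in this case analysis: in each case the reproducing property places the derivatives on one chosen argument, whereas the block $\mxKb_{i_1,i_2}$ in \eqref{eqn:Ki1i2} carries them on the conjugate argument. Reconciling the two amounts to observing that each inner product is a scalar, hence equal to its own transpose, and then applying the kernel symmetry $\Kernel(\vcx,\vcy)=\Kernel(\vcy,\vcx)^\tr$ together with its differentiated consequence $\partial_\alpha^2\Kernel(\vcx,\zero)=\big(\partial_\alpha^1\Kernel(\zero,\vcx)\big)^\tr$ to transport the derivatives and the transpose onto the exact form recorded in \eqref{eqn:Ki1i2}. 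Once every term is matched, summing over $i_1,i_2$ gives $\|g\|_{\HK}^2=\sum_{i_1,i_2}\vca_{i_1}^\tr\mxKb_{i_1,i_2}\vca_{i_2}=\vcab^\tr\mxKb\vcab$, which completes the proof.
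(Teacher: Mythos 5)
Your argument is correct and is precisely the route the paper intends: the paper states this proposition without a written proof, justifying it only by the phrase ``due to the reproducing property,'' and your proposal fills in exactly those details --- membership in $\HK$ via Theorem~\ref{thm:RKHS_rep}(i) and linearity, and the norm identity via bilinear expansion, the reproducing property, and the kernel symmetry $\Kernel(\vcx,\vcy)=\Kernel(\vcy,\vcx)^\tr$ to reconcile which argument carries the derivatives in \eqref{eqn:Ki1i2}. No gaps; your auxiliary identity $(\partial_\alpha^1\Kernel_{\vcx}\vcy)(\vcz)=\partial_\alpha^1\Kernel(\vcx,\vcz)\vcy$ and the scalar-transpose bookkeeping are exactly what is needed to make the one-line justification rigorous.
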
	
\begin{theorem}\label{thm:finitedimensional_reg_opt}
Consider the following optimization problem
\begin{equation}\label{eqn:reg_g_finite}
\begin{array}{cl}
\minOp_{\substack{\mxP\succeq\eye,\ \vcab\in\Rbb^{nm}\\\mxD\in\Rbb^{n\times n}}}
&
\sumOp_{i=1}^\nS\|\vc{y}_i-\mxT^{-1}\mxKb_i\vcab\|_{\mxW}^2 + \lambda \vcab^\tr \mxKb \vcab
\\
\mathrm{s.t.}
&\vcx_i^\tr\mxP\mxT^{-1}\mxKb_i\vcab\le -\|\vcx_i\|^2, 
\qquad  
\qquad  
\forall i = p-\nG+1,\ldots,p,
\\&
\mxD = 
\begin{bmatrix}
\mxKb_{m-n+1}\vcab&\ldots&\mxKb_{m-1}\vcab&\mxKb_m\vcab
\end{bmatrix},
\\&
\frac{1}{2}(\mx{P}\mxT^{-1}\mx{D}+\mx{D}^\tr\mxT^{-1}
\mx{P}) \preceq -\eye,
\\&
\mxKb_0\vcab = \zero.
\end{array}
\end{equation}
Then, for each solution of \eqref{eqn:reg_g_finite}, one can find a solution for \eqref{eqn:reg_g} with same cost value.
\end{theorem}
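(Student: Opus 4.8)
The plan is to use Theorem~\ref{thm:min_ginFP_Risk_unique solution} to collapse the infinite-dimensional search over $g$ in \eqref{eqn:reg_g} onto the finite-dimensional subspace $\Vcal$, and then to rewrite every term of \eqref{eqn:reg_g} as an algebraic expression in the coefficient vector $\vcab$ and the matrix $\mxP$. First I would observe that \eqref{eqn:reg_g} is the joint minimization $\min_{\mxP\succeq\eye}\min_{g\in\Fcal_{\mxP}}[\,\cdots\,]$, because for a fixed $\mxP$ the three $g$-constraints of \eqref{eqn:reg_g} are exactly those defining $\Fcal_{\mxP}$ in \eqref{eqn:FP}. By Theorem~\ref{thm:min_ginFP_Risk_unique solution} the inner problem attains its unique minimizer $g^*_{\mxP}$ inside $\Vcal$, so without loss of optimality $g$ may be taken of the form $g=\mxkb(\cdot)\vcab$ with $\vcab\in\Rbb^{nm}$ the free parameter.

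Next I would substitute $g=\mxkb(\cdot)\vcab$ and identify each piece of \eqref{eqn:reg_g} with its counterpart in \eqref{eqn:reg_g_finite}. Using $g(\vcx_i)=\mxkb(\vcx_i)\vcab=\mxKb_i\vcab$ together with the Proposition (which gives $\|g\|_{\HK}^2=\vcab^\tr\mxKb\vcab$), the objective of \eqref{eqn:reg_g} becomes the objective of \eqref{eqn:reg_g_finite}. The grid inequalities $\vcz_i^\tr\mxP\mxT^{-1}g(\vcz_i)\le-\|\vcz_i\|^2$ turn into $\vcx_i^\tr\mxP\mxT^{-1}\mxKb_i\vcab\le-\|\vcx_i\|^2$ over the grid indices $i=p-\nG+1,\dots,p$, and $g(\zero)=\zero$ becomes $\mxKb_0\vcab=\zero$ since $\vcx_0=\zero$.

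The one step requiring care is the Lie-derivative LMI at the origin, which involves $\mx{D}g(\zero)$. Here I would differentiate the finite linear combination $g=\mxkb(\cdot)\vcab$ termwise, which is legitimate because it is a finite sum of $C^{2s}$ kernel sections and their first derivatives, to obtain $\partial_j g(\zero)=\big([\partial_j\mxkb](\zero)\big)\vcab$. Matching $[\partial_j\mxkb](\zero)$ block-by-block against the third and fourth cases of \eqref{eqn:Ki1i2} shows that its blocks are precisely $\partial_j^2\Kernel(\vcx_{i'},\zero)$ and $\partial_j^2\partial_k^1\Kernel(\zero,\zero)$, i.e. exactly the derivative block $\mxKb_i$ with $i>p$ appearing in the definition of $\mxD$. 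Hence the $n$ columns collected in the auxiliary matrix $\mxD$ equal $\partial_1 g(\zero),\dots,\partial_n g(\zero)$, so $\mxD=\mx{D}g(\zero)$ and the constraint $\frac{1}{2}(\mxP\mxT^{-1}\mxD+\mxD^\tr\mxT^{-1}\mxP)\preceq-\eye$ coincides with the LMI in \eqref{eqn:reg_g}. I expect this index-matching against \eqref{eqn:Ki1i2} to be the main, though routine, obstacle.

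Finally I would assemble the equivalence of optimal values. Given any solution $(\mxP^\star,\vcab^\star,\mxD^\star)$ of \eqref{eqn:reg_g_finite}, the function $g^\star=\mxkb(\cdot)\vcab^\star$ is, by the identifications above, feasible for \eqref{eqn:reg_g} with the same objective value, so the optimal value of \eqref{eqn:reg_g} is at most that of \eqref{eqn:reg_g_finite}. Conversely, applying Theorem~\ref{thm:min_ginFP_Risk_unique solution} to an optimal $(g,\mxP)$ of \eqref{eqn:reg_g} replaces $g$ by its $\Vcal$-component without increasing the cost and encodes it as a feasible point of \eqref{eqn:reg_g_finite}, yielding the reverse inequality. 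The two optimal values therefore coincide, so $(g^\star,\mxP^\star)$ is an optimal solution of \eqref{eqn:reg_g} with the same cost, which is the asserted statement.
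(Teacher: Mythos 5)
Your proposal is correct and follows essentially the same route as the paper: rewrite \eqref{eqn:reg_g} as the nested minimization \eqref{eqn:reg_g_min_P_min_g}, invoke Theorem~\ref{thm:min_ginFP_Risk_unique solution} to restrict $g$ to the form $\mxkb(\cdot)\vcab$, and substitute using the reproducing property and \eqref{eqn:Ki1i2}. You simply spell out the constraint-by-constraint identification (in particular $\mxD=\mx{D}g(\zero)$) and the two-sided value comparison that the paper's much terser proof leaves implicit.
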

\begin{proof}
One can restate optimization problem \eqref{eqn:reg_g} in the following form
\begin{equation}\label{eqn:reg_g_min_P_min_g}
\minOp_{\mx{P}\succeq\eye}
\ \bigg{(}
\minOp_{g\in\Fcal_{\mxP}} \ 
\sumOp_{i=1}^\nS\|\vc{y}_i-\mxT^{-1}g(\vc{x}_i)\|_{\mxW}^2 + \lambda \|g\|^2_{\HK}\bigg{)}.
\end{equation}
From Theorem \ref{thm:min_ginFP_Risk_unique solution}, we know that the solution of the inner problem is in the form of $g(\vcx)=\mxkb(\vcx)\vcab$, for a vector $\vcab\in\Rbb^{mn}$. Utilizing the reproducing property of the kernel and according to \eqref{eqn:Ki1i2}, substituting $g(\vcx)=\mxkb(\vcx)\vcab$ in \eqref{eqn:reg_g}, results in optimization problem \eqref{eqn:reg_g_min_P_min_g}. Solving \eqref{eqn:reg_g_min_P_min_g}, we obtain $\vcab,\mxP,\mxD$, and subsequently, $g$. 
\end{proof}
Due to Theorem \ref{thm:finitedimensional_reg_opt}, in order to solve \eqref{eqn:reg_g}, it is sufficient to find the solutions of \eqref{eqn:reg_g_finite}, which is a finite-dimensional optimization. 
One can see that  \eqref{eqn:reg_g_finite} is a finite dimensional optimization problem with a convex cost and, linear and bilinear constraints, and subsequently,  it is not a convex optimization.  

Note that the kernel $\Kernel$ is characterized by a number of constants called {\em hyperparameters}. Here, this is implicitly assumed and for the sake of more transparent discussion, we have dropped this dependency in the notations. 
The hyperparameters are required to be estimated based on the data. 
This is commonly done using a cross-validation routine. 
The hyperparameter estimation  is essentially a computationally demanding procedure, especially when the kernel has a large number of hyperparameters, which can be the case when operator-valued kernels are used. 
The non-convexity of problem increases the computational complexity of the hyperparameters estimation to the point of potential intractability.
These issues will be addressed in the following section.
%_________________________________________________________
%_________________________________________________________
%_________________________________________________________
\subsection{Diagonal Kernels}
In order to alleviate the issue of having a large number of hyperparameters, we take the kernel $\Kernel$ as a diagonal kernel in the form of $\Kernel(\vcx,\vcy)=\kernel(\vcx,\vcy)\eye$, where $\kernel\in C^{2s}(\Xcal\times\Xcal,\Rbb)$ is a scalar valued Mercer kernel. 
Define $\vck:\Xcal\to\Rbb^{m}$ as 
\begin{equation}
\vck(\vcx):=\begin{bmatrix}
\kernel_{\vcx_0}(\vcx)&\ldots &\kernel_{\vcx_p}(\vcx)&  \partial_1^1\kernel_{\zero}(\vcx)&\ldots &\partial_n^1\kernel_{\zero}(\vcx)
\end{bmatrix}^\tr, \qquad \forall \vc{x}\in\Xcal.
\end{equation}
By defining the matrix $\mxA\in\Rbb^{n\times m}$ as
$\mxA:=[\vca_0\ldots\vca_{m-1}]$, one can see that for the {\em unique } solution of \eqref{eqn:min_ginFP_loss}, we have $g^*_{\mxP} = \mxA \vck$. 
Similar to $\mxKb$ and $\{\mxKb_i\}_{i=0}^m$, we define matrices $\mxK\in\Rbb^{m\times m}$ and $\mxK_i\in\Rbb^{m\times 1}$ respectively as
$\mxK:=[\mxK_{i_1,i_2}]_{i_1=0,i_2=0}^{m,m}$ and 
$\mxK_i:=[\mxK_{i,0},\ldots,\mxK_{i,m}]^\tr$, for any $i=1,\ldots,m$, 
where, for each $i_1,i_2=0,\ldots,m$, $\mxK_{i_1,i_2}$  is defined similarly to \eqref{eqn:Ki1i2} but based on $\kernel$. Also, let $\mxJ\in\Rbb^{m\times n}$ be the Jacobian or derivative of $\vck$ at $\vcx=\zero$, i.e.,
\begin{equation}
\mxJ:=
\begin{bmatrix}
\partial_1\vck(\zero)&\ldots &\partial_n\vck(\zero)
\end{bmatrix}.
\end{equation}
Accordingly, we have $\mx{D}g(\zero)=\mxA \mxJ$.
Due to the reproducing property, we have the following proposition.
\begin{proposition}
Let $\mxA\in\Rbb^{m\times n}$ and $g:\Xcal\to\Rbb^n$ be defined such that $g(\vcx)=\mxA\vck(\vcx)$, for any $\vc{x}\in\Xcal$. 
Then, we have that $g\in\Hk$ and 
$\|g\|_{\Hk}^2 =  \trace\big{(}\mxA \mxK \mxA^\tr \big{)}$.
\end{proposition}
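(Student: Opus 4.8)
The plan is to deduce both assertions from the Proposition already established above for a general operator-valued kernel $\Kernel$, by feeding it the particular coefficient vector coming from $\mxA$ and then exploiting the diagonal structure $\Kernel(\vcx,\vcy)=\kernel(\vcx,\vcy)\eye$. First I would record that every block of $\mxkb(\vcx)$ is a scalar multiple of the identity: since $\Kernel=\kernel\eye$ we have $\Kernel_{\vcx_i}(\vcx)=\kernel_{\vcx_i}(\vcx)\eye$ and $\partial_j^1\Kernel_{\zero}(\vcx)=\partial_j^1\kernel_{\zero}(\vcx)\eye$, so the $i^{\text{th}}$ block of $\mxkb(\vcx)$ is exactly $[\vck(\vcx)]_i\,\eye$. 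Writing $\vcab:=[\vca_0^\tr\ldots\vca_{m-1}^\tr]^\tr$ as the column-stacking of $\mxA=[\vca_0\ldots\vca_{m-1}]$, a blockwise multiplication then gives
\begin{equation*}
\mxkb(\vcx)\vcab=\sum_{i}[\vck(\vcx)]_i\,\vca_i=\mxA\vck(\vcx)=g(\vcx).
\end{equation*}
Hence the $g$ of the present statement is precisely the function treated by the general-kernel Proposition for this $\vcab$, and the membership $g\in\Hk$ together with $\|g\|_{\Hk}^2=\vcab^\tr\mxKb\vcab$ follows immediately.

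It then remains to convert the quadratic form $\vcab^\tr\mxKb\vcab$ into $\trace(\mxA\mxK\mxA^\tr)$. The key observation is that the diagonal structure propagates through all four cases of \eqref{eqn:Ki1i2}: each block of $\mxKb$ is a scalar times the identity, $\mxKb_{i_1,i_2}=\mxK_{i_1,i_2}\eye$, i.e. $\mxKb=\mxK\otimes\eye$. The cleanest finish avoids any vectorization bookkeeping and expands directly in blocks,
\begin{equation*}
\vcab^\tr\mxKb\vcab=\sum_{i_1,i_2}\vca_{i_1}^\tr\mxKb_{i_1,i_2}\vca_{i_2}=\sum_{i_1,i_2}\mxK_{i_1,i_2}\,\vca_{i_1}^\tr\vca_{i_2},
\end{equation*}
and the same double sum is recognized as the trace by expanding entrywise, using $\vca_{i_1}^\tr\vca_{i_2}=\sum_k\mxA_{k,i_1}\mxA_{k,i_2}$ and $\trace(\mxA\mxK\mxA^\tr)=\sum_{i_1,i_2}\mxK_{i_1,i_2}\sum_k\mxA_{k,i_1}\mxA_{k,i_2}$. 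Equivalently, one may invoke the identity $(\mxK\otimes\eye)\,\mathrm{vec}(\mxA)=\mathrm{vec}(\mxA\mxK)$ (using $\mxK=\mxK^\tr$) followed by $\mathrm{vec}(\mxA)^\tr\mathrm{vec}(\mxA\mxK)=\trace(\mxA^\tr\mxA\mxK)$ and cyclicity of the trace.

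The argument is essentially bookkeeping, so the single step I would treat as the main obstacle is verifying that the factorization $\mxKb=\mxK\otimes\eye$ survives in the mixed second-derivative blocks of \eqref{eqn:Ki1i2}, where the kernel is differentiated once in each argument before being evaluated at the origin; one must confirm that differentiation commutes with the scalar-times-identity structure, i.e. $\partial^2_{i_1-p}\partial^1_{i_2-p}(\kernel\,\eye)=\big(\partial^2_{i_1-p}\partial^1_{i_2-p}\kernel\big)\eye$, and likewise for the single-derivative and pure-evaluation blocks. Once the four cases are checked, both the membership $g\in\Hk$ and the norm formula $\|g\|_{\Hk}^2=\trace(\mxA\mxK\mxA^\tr)$ follow at once.
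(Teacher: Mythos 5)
Your proof is correct and complete: the paper states this proposition without proof (merely citing the reproducing property), and your reduction to the operator-valued proposition via $\mxkb(\vcx)\vcab=\mxA\vck(\vcx)$, together with the block identity $\mxKb=\mxK\otimes\eye$ (valid in all four cases of \eqref{eqn:Ki1i2} since differentiation acts entrywise on $\kernel\,\eye$) and the trace expansion $\sum_{i_1,i_2}\mxK_{i_1,i_2}\vca_{i_1}^\tr\vca_{i_2}=\trace(\mxA\mxK\mxA^\tr)$, is exactly the computation that one-line justification leaves implicit. The only point worth flagging is that the statement's $\mxA\in\Rbb^{m\times n}$ is a typo for $\mxA\in\Rbb^{n\times m}$, as the paper's own definition $\mxA=[\vca_0\ \ldots\ \vca_{m-1}]$ and the product $\mxA\vck(\vcx)\in\Rbb^n$ require; you implicitly and correctly adopt the latter convention.
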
	

Based on the discussion above, analogous to %optimization problem 
\eqref{eqn:reg_g_finite}, we can introduce the following  optimization problem
\begin{equation}\label{eqn:reg_g_finite_diagonal_kernel}
\begin{array}{cl}
\minOp_{\substack{\mxP\succeq\eye\\
\ \mxA\in\Rbb^{n\times m}}}
&
\sumOp_{i=1}^\nS\|\vc{y}_i-\mxT^{-1}\mxA\mxK_i\|_{\mxW}^2 + \lambda \ \trace\big{(}\mxA \mxK \mxA^\tr \big{)}
\\
\mathrm{s.t.}
&
\vcx_i^\tr\mxP\mxT^{-1}\mxA\mxK_i\le -\|\vcx_i\|^2, \qquad  
\qquad  
\qquad  
\forall i = p-\nG+1,\ldots,p,
\\&
\frac{1}{2}(\mx{P}\mxT^{-1}\mxA\mxJ + \mxJ^\tr\mxA^\tr\mxT^{-1}
\mx{P}) \preceq -\eye,
\\&  
\mxA\mxK_0 = \zero.
\end{array}
\end{equation}
One should note that, the complexity of this optimization problem, in terms of the number of variables and constraints, is significantly lower than \eqref{eqn:reg_g_finite}, especially when the dimension of the state space, $n$, is large.
%_________________________________________________________
%_________________________________________________________
%_________________________________________________________
\subsection{Towards a Convex Formulation}
In the estimation problem \eqref{eqn:reg_f}, and subsequently \eqref{eqn:reg_g}, the matrices $\mx{W}$ and $\mxT$ are introduced as arbitrary positive definite matrices. One can see that the mathematical arguments (up to Theorem   \ref{thm:finitedimensional_reg_opt}) only require the fact that $\mxW$ and $\mxT$ do not depend on $g$. This provides the opportunity of choosing them such that a change of variables lead to a convex formulation. In fact, we set $\mxW:=\mxP^2$ and $\mxT:=\mxP$. Note that for any $\vcx\in\Rbb^n$, one has $\|\vcx\|_{\mxP^2}=\|\mxP\vcx\|$. Accordingly, we have
\begin{equation}
\sum_{i=1}^\nS\|\vc{y}_i-\mxT^{-1}g(\vc{x}_i)\|_{\mxW}^2 =
\sum_{i=1}^\nS\|\mxP\vc{y}_i-g(\vc{x}_i)\|^2.
\end{equation} 
Therefore, optimization problem \eqref{eqn:reg_g} can be modified to give
\begin{equation}\label{eqn:reg_g_cvx}
\begin{array}{cl}
\minOp_{g\in\HK,\ \mx{P}\succeq\eye}
&
\sumOp_{i=1}^\nS\|\mx{P}\vc{y}_i-g(\vc{x}_i)\|^2 + \lambda \|g\|^2_{\HK}
\\
\mathrm{s.t.}
&
\vc{z}_i^\tr g(\vc{z}_i)\le -\|\vc{z}_i\|^2, \qquad\qquad\forall i = 1,\ldots,\nG,
\\&
\frac{1}{2}(\mx{D}g(\zero)+\mx{D}g(\zero)^\tr) \preceq -\eye,
\\& g(\zero) = \zero.
\end{array}
\end{equation}
One can see that \eqref{eqn:reg_g_cvx} is a convex optimization problem which by  Theorem \ref{thm:min_ginFP_Risk_unique solution} has a unique solution of the form
\begin{equation}\label{eqn:rep_gstar}
g^* 
= 
\sumOp_{i=0}^{p} \Kernel_{\vcx_i}\vca_i
+
\sumOp_{j=1}^{n} \partial_j^1\Kernel_{\zero}\vcb_j.
\end{equation}
By substituting the solution \eqref{eqn:rep_gstar} into \eqref{eqn:reg_g_cvx}, we obtain a finite problem (analogous to \eqref{eqn:reg_g_finite}) as
\begin{equation}\label{eqn:reg_g_finite_cvx}
\begin{array}{cl}
\minOp_{\substack{\mxP\succeq\eye,\ \vcab\in\Rbb^{nm}\\\mxD\in\Rbb^{n\times n}}}
&
\sumOp_{i=1}^\nS\|\mxP\vc{y}_i-\mxKb_i\vcab\|^2 + \lambda \vcab^\tr \mxKb \vcab
\\
\mathrm{s.t.}
&
\vcx_i^\tr\mxKb_i\vcab\le -\|\vcx_i\|^2, \qquad\qquad\forall i = p - \nG + 1,\ldots,p,
\\&
\mxD = 
\begin{bmatrix}
\mxKb_{m - n + 1}\vcab&\ldots&\mxKb_{m-1}\vcab&\mxKb_m\vcab
\end{bmatrix},
\\&
\frac{1}{2}(\mx{D}+\mx{D}^\tr) \preceq -\eye,
\\&
\mxKb_0\vcab = \zero.
\end{array}
\end{equation}
In the case of diagonal kernels, the modified version of \eqref{eqn:reg_g_finite_diagonal_kernel} is the following
\begin{equation}\label{eqn:reg_g_finite_diagonal_kernel_cvx}   
\begin{array}{cl}
\minOp_{\substack{\mxA\in\Rbb^{n \times  m} ,\ \mxP\succeq\eye}}
&
\sum_{i=1}^\nS \|\mxP\vc{y}_i-\mxA\mxK_i\|^2 + \lambda \   \trace\big{(}\mxA \mxK \mxA^\tr \big{)}
\\
\mathrm{s.t.}
&\vcx_i^\tr\mxA\mxK_i\le -\|\vcx_i\|^2 , 
\qquad \qquad \forall i = p - \nG + 1,\ldots, p,
\\&
\frac{1}{2}(\mxA\mxJ+\mxJ^\tr\mxA^\tr) \preceq -\eye, 
\\&
\mxA\mxK_0 = \zero.
\end{array}
\end{equation}

Based on the above discussion, we have the following proposition.
\begin{proposition}
The optimization problems \eqref{eqn:reg_g_finite_cvx} and \eqref{eqn:reg_g_finite_diagonal_kernel_cvx} are feasible convex programs with strongly convex objective functions. Therefore, each of \eqref{eqn:reg_g_finite_cvx} and \eqref{eqn:reg_g_finite_diagonal_kernel_cvx} admits a unique solution.
\end{proposition}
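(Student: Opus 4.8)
The plan is to establish, for each program, three facts—feasibility, closedness and convexity of the feasible set, and strong convexity of the objective—and then to invoke the standard fact that a continuous strongly convex function attains a unique minimizer over a nonempty closed convex set. Since \eqref{eqn:reg_g_finite_cvx} and \eqref{eqn:reg_g_finite_diagonal_kernel_cvx} share the same structure, namely a sum of squared residuals that are affine in the decision variables plus a positive-semidefinite quadratic regularizer, subject to affine (in)equalities and a linear matrix inequality, I would carry out the argument in detail for \eqref{eqn:reg_g_finite_cvx} in the variables $(\mxP,\vcab,\mxD)$ and transcribe it to $(\mxP,\mxA)$ for the diagonal case.

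For feasibility, I would exploit that, once $\mxT=\mxP$ and $\mxW=\mxP^2$, the constraints on $g$ in \eqref{eqn:reg_g_cvx} are independent of $\mxP$: the pair $\mxP=\eye$ and $g(\vcx)=-\vcx$ is admissible, because $g\in\HK$ (as $h(\vcx)=\vcx\in\HK$) and $g(\zero)=\zero$, $\vcz_i^\tr g(\vcz_i)=-\|\vcz_i\|^2$, $\frac{1}{2}(\mxD g(\zero)+\mxD g(\zero)^\tr)=-\eye$. By Theorem \ref{thm:min_ginFP_Risk_unique solution} the minimizer over this nonempty constraint set lies in the span $\Vcal$, so the associated coefficient vector $\vcab$ is feasible for \eqref{eqn:reg_g_finite_cvx} (and likewise $\mxA$ for the diagonal program). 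Convexity is then routine: each residual $\mxP\vcy_i-\mxKb_i\vcab$ is affine in $(\mxP,\vcab)$, so each squared residual is convex; $\lambda\,\vcab^\tr\mxKb\vcab$ is convex because $\mxKb$ is positive semidefinite (it equals $\|\mxkb(\cdot)\vcab\|_{\HK}^2$); the inequalities $\vcx_i^\tr\mxKb_i\vcab\le-\|\vcx_i\|^2$ and the equality $\mxKb_0\vcab=\zero$ are affine; and $\mxP\succeq\eye$, $\frac{1}{2}(\mxD+\mxD^\tr)\preceq-\eye$ are linear matrix inequalities. Hence the feasible set is an intersection of closed convex sets.

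The essential and hardest step is strong convexity. Since the objective involves only $(\mxP,\vcab)$, with $\mxD$ pinned by its defining equality, it suffices to establish strong convexity in $(\mxP,\vcab)$; uniqueness of $\mxD$ then follows. I would extract the homogeneous second-order part of the objective along a perturbation $(\Delta\mxP,\Delta\vcab)$ with $\Delta\mxP$ symmetric,
\[
Q(\Delta\mxP,\Delta\vcab)=\sum_{i=1}^{\nS}\|\Delta\mxP\vcy_i-\mxKb_i\Delta\vcab\|^2+\lambda\,\Delta\vcab^\tr\mxKb\Delta\vcab,
\]
and show it is positive definite. It is manifestly nonnegative, and $Q=0$ forces both terms to vanish: the regularizer gives $\mxKb\Delta\vcab=\zero$, hence $\mxKb_i\Delta\vcab=\zero$ for every block-row $i$, so the residual term reduces to $\sum_i\|\Delta\mxP\vcy_i\|^2=0$, i.e. $\Delta\mxP\vcy_i=\zero$ for all $i$.

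Concluding $(\Delta\mxP,\Delta\vcab)=(\zeromx,\zero)$ from here is exactly the obstacle, and I would isolate it as two mild nondegeneracy conditions: that the kernel sections $\{\Kernel_{\vcx_i}\}_{i=0}^{p}\cup\{\partial_j^1\Kernel_{\zero}\}_{j=1}^{n}$ be linearly independent, so that $\mxKb$ is positive definite and $\mxKb\Delta\vcab=\zero$ yields $\Delta\vcab=\zero$; and that the data $\{\vcy_i\}_{i=1}^{\nS}$ span $\Rbb^n$, so that a symmetric $\Delta\mxP$ with $\Delta\mxP\vcy_i=\zero$ for all $i$ must be zero. Both hold generically (distinct sample points, a strictly positive-definite kernel, and sufficiently rich data), and under them $Q$ is positive definite, so the objective is $\mu$-strongly convex for some $\mu>0$; the identical computation with $\Delta\mxA\mxK_i$ and $\trace(\Delta\mxA\mxK\Delta\mxA^\tr)$ handles \eqref{eqn:reg_g_finite_diagonal_kernel_cvx}. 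Combining the three facts, the objective is continuous and strongly convex, hence coercive, and the feasible set is nonempty, closed and convex, so a minimizer exists and is unique, which is the claim.
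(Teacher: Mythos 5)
Your overall architecture---feasibility, convexity of the feasible set, strong convexity of the objective, then existence and uniqueness from coercivity---is exactly what the paper intends; the paper states this proposition with no explicit proof (``based on the above discussion''), so yours is the more careful account. The feasibility and convexity steps are fine: $\mxP=\eye$ together with $g=-h$ satisfies every constraint, and passing through Theorem~\ref{thm:min_ginFP_Risk_unique solution} (or simply projecting a feasible $g$ onto $\Vcal$, which preserves all the point evaluations and the derivative at $\zero$ appearing in the constraints) produces a feasible $\vcab$; all remaining constraints are affine equalities/inequalities or linear matrix inequalities.

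The substantive point is the one you flag yourself: strong convexity does not hold unconditionally. The objective of \eqref{eqn:reg_g_finite_cvx} does not involve $\mxD$ at all, and its quadratic part in $(\mxP,\vcab)$ degenerates whenever $\mxKb$ is singular or $\mathrm{span}\{\vcy_1,\ldots,\vcy_{\nS}\}\ne\Rbb^n$. Neither failure is exotic: $\mxKb$ is the Gram matrix of $m=1+\nS+\nG+n$ kernel and derivative sections, and for a polynomial kernel (the paper's own suggested choice) with $\nG=300$ grid points the associated RKHS is finite-dimensional, so $\mxKb$ is necessarily rank-deficient and $\vcab$ is not unique (only the function $g=\mxkb(\cdot)\vcab$ is, by the $\HK$-norm argument of Theorem~\ref{thm:min_ginFP_Risk_unique solution}); likewise, if the $\vcy_i$ span a proper subspace $S$, one may add $t\,\Pi_{S^{\perp}}$ to an optimal $\mxP$ without changing the cost or violating $\mxP\succeq\eye$, so $\mxP$ is not unique either. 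Your two nondegeneracy conditions ($\mxKb$ positive definite and $\rank[\vcy_1,\ldots,\vcy_{\nS}]=n$) are therefore genuinely needed for the proposition as stated, and the gap they close is in the paper's claim rather than in your argument. Under those hypotheses your computation of the homogeneous quadratic form $Q$ and the conclusion $(\Delta\mxP,\Delta\vcab)=(0,0)$ are correct, and the transcription to \eqref{eqn:reg_g_finite_diagonal_kernel_cvx} with $\mxA\mxK_i$ and $\trace(\mxA\mxK\mxA^{\tr})$ goes through verbatim; I would only suggest stating the two conditions as explicit hypotheses rather than as generic remarks.
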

\begin{remark}
Due to convexity of optimization problems \eqref{eqn:reg_g_finite_cvx} and \eqref{eqn:reg_g_finite_diagonal_kernel_cvx}, one can use off-the-shelf optimization solvers, like CVX \cite{grant2014cvx}, in order to obtain the solutions.
\end{remark}
\begin{remark}
One should note that once $\mxP$ and $g$ are obtained, the vector field $f$ is calculated as $\mxP^{-1}g$.
\end{remark}

\section{Numerical Experiments}
In this section, we discuss a numerical example.
To this end, consider the dynamical system
defined as 
\begin{equation}\label{eqn:xdot=f(x)_example}
\begin{array}{rcl}
\dot{x}_1 &=& -5x_2-4x_1  +x_1x_2^2-6x_1^3, \\
\dot{x}_2 &=& -20x_1-4x_2 + 4 x_1^2x_2 + x_2^3.  \\
\end{array}
\end{equation}
For system \eqref{eqn:xdot=f(x)_example}, $(x_1,x_2)=\zero$ is a stable equilibrium point, which is attracting in region $\Omega$ defined as $\Omega:=\Bcal(\zero,1.5)$.
Assume that this information is provided as prior knowledge. 
Moreover, we consider two trajectories of the system, starting from  $\vcx_0=(1,-1)$ and  $\vcx_0=(-1,-1))$, and take samples from each of them at different $19$ locations with an additive measurement noise of $\Ncal(0,\sigma^2)$ where $\sigma^2 = 0.001$. 
For identifying \eqref{eqn:xdot=f(x)_example}, we take two approaches:~ 1) we utilize the prior knowledge on the ROA and solve  \eqref{eqn:reg_g_finite_diagonal_kernel_cvx},~
2) only the stability of the equilibrium point is considered and we solve a modified version of \eqref{eqn:reg_g_finite_diagonal_kernel_cvx} where $\mxP=\eye$ and the grid constraints are removed.
Let the corresponding solutions be denoted by $\hat{f}$ and $\check{f}$, respectively. 
The main difference of these two approaches is the inclusion of the prior knowledge on the region of attraction in the estimation method. Accordingly, the comparison of $\hat{f}$ and $\check{f}$ can reflect the impact and the potential leverage of using the prior knowledge of the ROA on the estimation. 
For $\Zcal$, we take a uniform polar grid of size $\nG=300$ inside $\Omega$.
Given these settings, we obtain estimations $\hat{f}$ and $\check{f}$. The results are shown in Figure \ref{fig:f}. The calculated {coefficient of determination} in the unit square, also known as {\em R squared}, for $\hat{f}$ and $\check{f}$ is $93.6\%$ and  $80.4\%$, respectively.

\begin{figure}[t]
	\centering	
	\begin{subfigure}{.8\textwidth}
		\centering
		\includegraphics[width=0.9\linewidth]{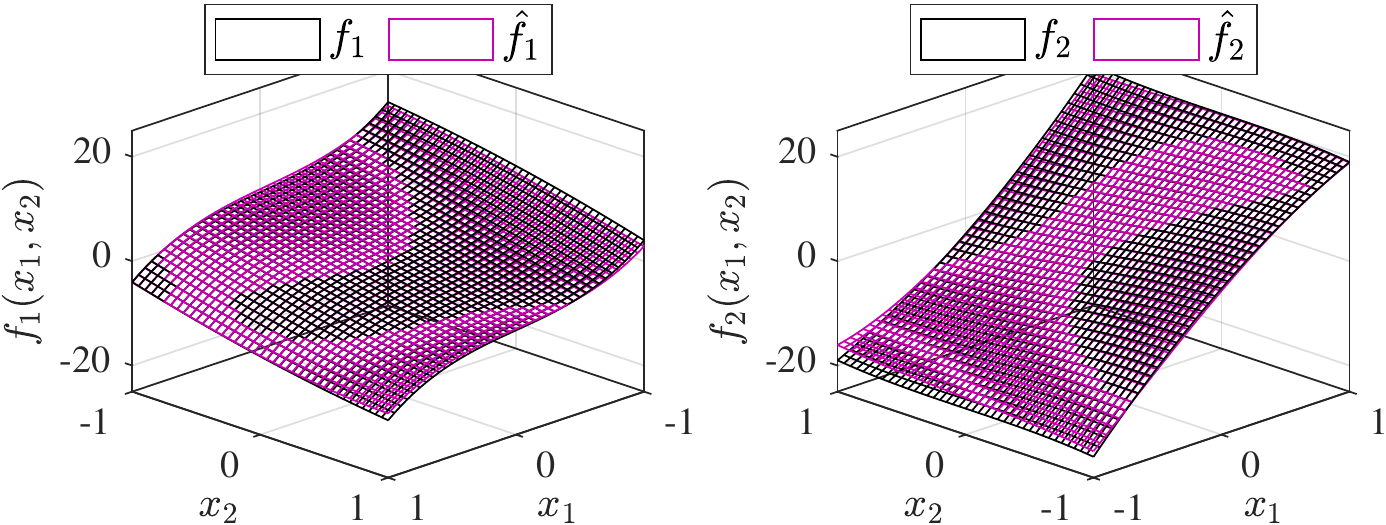} 
		\label{fig:sub-first}
	\end{subfigure}\\	\vspace{5mm}	
	\centering	
	\begin{subfigure}{.8\textwidth}
		\centering
		\includegraphics[width=.9\linewidth]{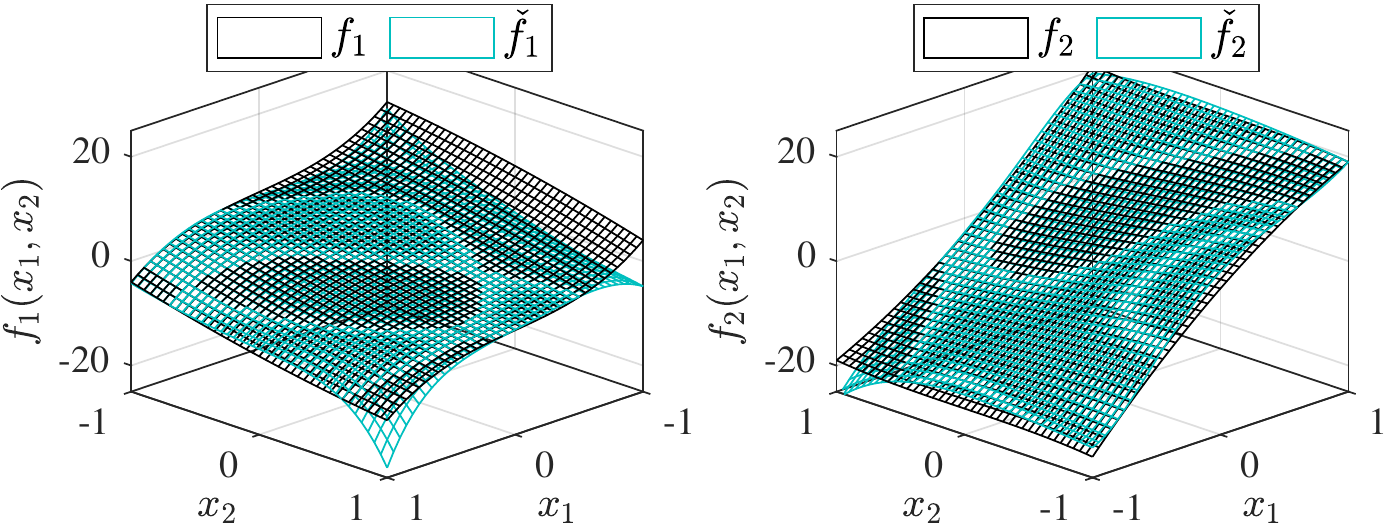} 
		\label{fig:sub-second}
	\end{subfigure}
	\caption{The figure shows coordinates of vector fields $f$ (black),  $\hat{f}$ (magenta), the estimation with ROA prior knowledge, and  $\check{f}$ (green), estimation without the ROA prior knowledge.}
	\label{fig:f}
\end{figure}

Let $\vcx_f$, $\vcx_{\hat{f}}$ and $\vcx_{\check{f}}$ denote the trajectories generated from $f$, $\hat{f}$ and $\check{f}$, respectively.
We consider initial points $\vcx_0=(-1,1),(-1.45,0),(1,1.5)$. The corresponding trajectories are shown in Figure \ref{fig:traj}. For point $\vcx_0=(-1,1)\in\Omega$, all of the trajectories goes to the equilibrium point $(0,0)$. One can see that trajectory $\vcx_{\hat{f}}$ stays close to the trajectory of true system, $\vcx_f$, while $\vcx_{\check{f}}$ deviates from $\vcx_f$ significantly.
Starting from $\vcx_0=(-1.45,0)\in\Omega$, trajectories  $\vcx_f$ and $\vcx_{\check{f}}$ stay close to each other and converge to $(0,0)$, meanwhile $\vcx_{\check{f}}$ diverges. This confirms that the prior knowledge is satisfied by the estimated vector field $\hat{f}$.
Finally, if $\vcx_0=(-1.45,0)\notin\mathrm{ROA}$, trajectory $\vcx_f$ as well as trajectory $\vcx_{\hat{f}}$ diverge. However, $\vcx_{\check{f}}$ converges to $(0,0)$ which is not expected. 

\begin{figure}[t]
	\centering
	\includegraphics[width=0.45\textwidth]{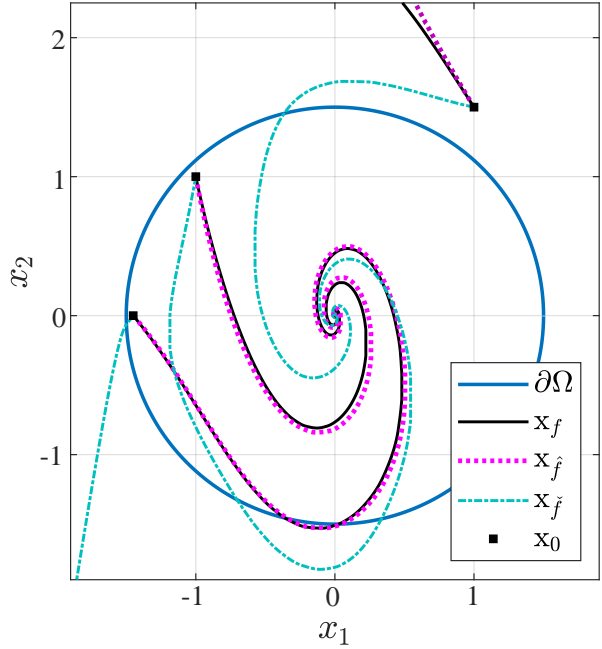}
	\caption{The different trajectories generated by the system (solid black), the estimated system with ROA prior knowledge, $\hat{f}$, (dotted magenta) and the estimation without the prior knowledge,  $\check{f}$, (dashed cyan). The circle shows the boundary of $\Omega$ which is the prior knowledge of the ROA.}
	\label{fig:traj}
\end{figure}
	
%\vspace{-1mm}
\section{Conclusion}\label{sec:conc}
We have discussed nonlinear system identification  when in addition to the measurement data, prior knowledge is available on a subset of the region of attraction (ROA) of an equilibrium point. The proposed identification method is an optimization problem minimizing the fitting error and guaranteeing the desired stability property. The resulting problem is a joint identification of the dynamics as well as a Lyapunov function for the stability property. 
Due to the functional hypothesis space for the dynamics and the Lie derivative inequalities for the stability, a bilinear infinite-dimensional optimization problem with infinite number of constraints is obtained. 
To get a tractable formulation, we consider a sufficient suitable finite subset of the constraints. The resulting problem admits a solution in form of a linear combination of the sections of the kernel and its derivatives. Subsequently, by a change of variable, we obtain a convex reformulation of the problem. Moreover, for reducing the number of hyperparameters, the optimization problem is adapted to the case of diagonal kernels. 
We have verified the approach and illustrated the results on an example. 
The fitting error for the estimation is low, and the estimated vector field confirms the expected behavior in the given subset of the ROA. 
In order to assess the impact of the prior knowledge, we have compared the method with an approach where the prior knowledge is not exploited. The comparison confirms the significance of the impact of the prior knowledge on the precision of the estimation and on the global behavior of the estimated system.
%\begin{figure}[t]
%	\vspace{-5mm}
%	\centering
%	\includegraphics[width=0.227\textwidth]{GB_01.pdf}
%	\vspace{-4mm}
%	\caption{The different trajectories generated by the system (solid black), the estimated system with ROA prior knowledge, $\hat{f}$, (dotted magenta) and the estimation without the prior knowledge,  $\check{f}$, (dashed cyan). The circle shows the boundary of $\Omega$ which is the prior knowledge of the ROA.}
%	\vspace{-4mm}
%	\label{fig:traj}
%\end{figure}	
%_______________________________________________
\bibliographystyle{IEEEtran}
\bibliography{mybib}

% Generated by IEEEtran.bst, version: 1.14 (2015/08/26)
\begin{thebibliography}{10}
\providecommand{\url}[1]{#1}
\csname url@samestyle\endcsname
\providecommand{\newblock}{\relax}
\providecommand{\bibinfo}[2]{#2}
\providecommand{\BIBentrySTDinterwordspacing}{\spaceskip=0pt\relax}
\providecommand{\BIBentryALTinterwordstretchfactor}{4}
\providecommand{\BIBentryALTinterwordspacing}{\spaceskip=\fontdimen2\font plus
\BIBentryALTinterwordstretchfactor\fontdimen3\font minus
  \fontdimen4\font\relax}
\providecommand{\BIBforeignlanguage}[2]{{%
\expandafter\ifx\csname l@#1\endcsname\relax
\typeout{** WARNING: IEEEtran.bst: No hyphenation pattern has been}%
\typeout{** loaded for the language `#1'. Using the pattern for}%
\typeout{** the default language instead.}%
\else
\language=\csname l@#1\endcsname
\fi
#2}}
\providecommand{\BIBdecl}{\relax}
\BIBdecl

\bibitem{schoukens2019nonlinear}
J.~Schoukens and L.~Ljung, ``Nonlinear system identification: A user-oriented
  road map,'' \emph{IEEE Control Systems Magazine}, vol.~39, no.~6, pp. 28--99,
  2019.

\bibitem{LjungBooK2}
L.~Ljung, \emph{{S}ystem identification: theory for the user}.\hskip 1em plus
  0.5em minus 0.4em\relax Prentice Hall, 1999.

\bibitem{pillonetto2014kernel}
G.~Pillonetto, F.~Dinuzzo, T.~Chen, G.~De~Nicolao, and L.~Ljung, ``Kernel
  methods in system identification, machine learning and function estimation: A
  survey,'' \emph{Automatica}, vol.~50, no.~3, pp. 657--682, 2014.

\bibitem{goethals2003identification}
I.~Goethals, T.~Van~Gestel, J.~Suykens, P.~Van~Dooren, and B.~De~Moor,
  ``Identification of positive real models in subspace identification by using
  regularization,'' \emph{IEEE Transactions on Automatic Control}, vol.~48,
  no.~10, pp. 1843--1847, 2003.

\bibitem{khosravi2019positive}
M.~Khosravi and R.~S. Smith, ``Kernel-based identification of positive
  systems,'' \emph{IEEE Conference on Decision and Control}, pp. 1740--1745,
  2019.

\bibitem{sattar2020non}
Y.~Sattar and S.~Oymak, ``Non-asymptotic and accurate learning of nonlinear
  dynamical systems,'' \emph{arXiv:2002.08538}, 2020.

\bibitem{singh2019learning}
S.~Singh, S.~M. Richards, V.~Sindhwani, J.-J.~E. Slotine, and M.~Pavone,
  ``Learning stabilizable nonlinear dynamics with contraction-based
  regularization,'' \emph{arXiv:1907.13122}, 2019.

\bibitem{calinon2010learning}
S.~Calinon, F.~D'halluin, E.~L. Sauser, D.~G. Caldwell, and A.~G. Billard,
  ``Learning and reproduction of gestures by imitation,'' \emph{IEEE Robotics
  \& Automation Magazine}, vol.~17, no.~2, pp. 44--54, 2010.

\bibitem{khansari2011learning}
S.~M. Khansari-Zadeh and A.~Billard, ``Learning stable nonlinear dynamical
  systems with {G}aussian mixture models,'' \emph{IEEE Transactions on
  Robotics}, vol.~27, no.~5, pp. 943--957, 2011.

\bibitem{khansari2014learning}
------, ``Learning control {L}yapunov function to ensure stability of dynamical
  system-based robot reaching motions,'' \emph{Robotics and Autonomous
  Systems}, vol.~62, no.~6, pp. 752--765, 2014.

\bibitem{ijspeert2013dynamical}
A.~J. Ijspeert, J.~Nakanishi, H.~Hoffmann, P.~Pastor, and S.~Schaal,
  ``Dynamical movement primitives: learning attractor models for motor
  behaviors,'' \emph{Neural Computation}, vol.~25, no.~2, pp. 328--373, 2013.

\bibitem{sindhwani2018learning}
V.~Sindhwani, S.~Tu, and M.~Khansari, ``Learning contracting vector fields for
  stable imitation learning,'' \emph{arXiv:1804.04878}, 2018.

\bibitem{khansari2017learning}
S.~M. Khansari-Zadeh and O.~Khatib, ``Learning potential functions from human
  demonstrations with encapsulated dynamic and compliant behaviors,''
  \emph{Autonomous Robots}, vol.~41, no.~1, pp. 45--69, 2017.

\bibitem{zhou2008derivative}
D.-X. Zhou, ``Derivative reproducing properties for kernel methods in learning
  theory,'' \emph{Journal of Computational and Applied Mathematics}, vol. 220,
  no. 1-2, pp. 456--463, 2008.

\bibitem{carmeli2006vector}
C.~Carmeli, E.~De~Vito, and A.~Toigo, ``Vector--valued reproducing kernel
  {H}ilbert spaces of integrable functions and {M}ercer theorem,''
  \emph{Analysis and Applications}, vol.~4, no.~04, pp. 377--408, 2006.

\bibitem{wang2019robust}
W.~Wang, P.~Yu, L.~Lin, and T.~Tong, ``Robust estimation of derivatives using
  locally weighted least absolute deviation regression,'' \emph{Journal of
  Machine Learning Research}, pp. 1--49, 2019.

\bibitem{peypouquet2015convex}
J.~Peypouquet, \emph{Convex optimization in normed spaces: theory, methods and
  examples}.\hskip 1em plus 0.5em minus 0.4em\relax Springer, 2015.

\bibitem{grant2014cvx}
M.~Grant and S.~Boyd, ``{CVX}: Matlab software for disciplined convex
  programming, version 2.1,'' 2014.

\end{thebibliography}
\end{document}